\newtheorem{theorem}{Theorem}[section]
\newtheorem{lemma}[theorem]{Lemma}
\theoremstyle{definition}
\theoremstyle{remark}
\newcommand{\va}{\mathbf a}
\newcommand{\vb}{\mathbf b}
\newcommand{\vz}{\mathbf z}
\newcommand{\Z}{\mathbb{Z}}
\newcommand{\abs}[1]{\lvert#1\rvert}
\title{On generalizations of  $p$-sets and their  applications}
\thanks{Heng Zhou was supported by the National
Natural Science Foundation of China (No. 61602341);
 Zhiqiang Xu was supported  by NSFC grant (11422113,  91630203, 11331012) and by National Basic Research Program of China (973 Program 2015CB856000).
}
\author{Heng Zhou}
\address{School of Sciences, Tianjin Polytechnic University, Tianjin 300160, China \newline{\tt E-mail address: zhouheng7598@sina.com.cn}}
\author{Zhiqiang Xu}
\address{LSEC, Institute of Computational Mathematics, Academy of Mathematics and System Sciences,
Chinese Academy of Sciences, Beijing 100190, China \newline{\tt E-mail address: xuzq@lsec.cc.ac.cn}}
\date{}
\begin{document}
\maketitle
\begin{center}
\parbox{11cm}{{\it Abstract}:\quad
The $p$-set, which is in a simple analytic form, is well distributed in unit cubes. The well-known Weil's  exponential sum theorem presents an upper bound of the exponential sum over the $p$-set. Based on the result, one shows that the $p$-set performs well in numerical integration, in compressed sensing as well as in UQ. However, $p$-set is somewhat rigid since the cardinality of the $p$-set is a prime $p$ and the set  only depends on the prime number $p$. The purpose  of this paper is to present  generalizations  of $p$-sets, say $\mathcal{P}_{d,p}^{\va,\epsilon}$, which is more flexible. Particularly, when a prime number $p$ is given, we have many different choices of the new $p$-sets. Under the assumption that Goldbach conjecture holds, for any even number $m$, we present  a point set, say ${\mathcal L}_{p,q}$, with cardinality $m-1$   by combining two different new $p$-sets, which overcomes a major bottleneck of the $p$-set. We also present the upper bounds of the exponential sums over  $\mathcal{P}_{d,p}^{\va,\epsilon}$ and
${\mathcal L}_{p,q}$, which imply these sets have many potential applications.
}
\end{center}
\vspace{0.3cm}
\begin{center}
\parbox{11cm}{{\it Key words and phrases}\quad $p$-set; Deterministic sampling; Numerical integral; Exponential  sum}
\end{center}
\vspace{0.3cm}
\begin{center}
\parbox{11cm}{{\it AMS Subject Classification 2000}\quad  11K38, 65C05, 11L05, 41A10, 65D32}
\end{center}
\mbox{}\\[10pt]
\section{Introduction}

\subsection{$p$-set}
Let $p$ be a prime number. We consider the point set
\[
\mathcal{P}_{d,p}=\left\{\mathbf{x}_{0}, \ldots, \mathbf{x}_{p-1}\right\}\subset[0, 1)^{d}
\]
where
\[
\mathbf{x}_{j}=\Big(\left\{\frac{j}{p}\right\}, \left\{\frac{j^{2}}{p}\right\}, \ldots, \left\{\frac{j^{d}}{p}\right\}\Big)\in [0,1)^d,\quad j\in \mathbb{Z}_{p},
\]
$\mathbb{Z}_{p}:=\left\{0, 1, \ldots, p-1\right\}$ and $\left\{x\right\}$ is the fractional part of $x$ for a
nonnegative real number $x$. The point set $\mathcal{P}_{d,p}$ is called $p$-set and was introduced by Korobov \cite{pset} and  Hua-Wang \cite{HW}. Recently, $p$-set attracts much attention since its advantage in numerical integration \cite{Dick3}, in the recovery of sparse trigonometric polynomials \cite{Xu} and in the UQ \cite{XZ}. In \cite{Dick3}, Dick presents a numerical integration formula based on $\mathcal{P}_{d,p}$ with showing the error bound of the formula depends only polynomially on the dimension $d$. In \cite{Xu}, Xu uses $\mathcal{P}_{d,p}$ to construct the deterministic sampling points of sparse trigonometric polynomials and show the sampling matrix corresponding to $\mathcal{P}_{d,p}$ has the almost optimal coherence. And hence, $\mathcal{P}_{d,p}$ has a good performance for the recovery of sparse trigonometric polynomials.

\subsection{ Extensions of $p$-set: $\mathcal{P}^{\va, \epsilon}_{d, p}$ and $\mathcal{L}_{p, q}$}
The $p$-set is in a simple analytic form and hence it is easy to be generated by computer. However, the $p$-set is somewhat rigid with the point
set only depending on a prime number $p$. If the function values at some points in $p$-set are not easy to be obtained, one has to change the prime
number $p$ to obtain a new point set which has the different cardinality with the previous one. Hence, in practical application, it will be better that one has many different choices.  We next introduce a generalization  of $p$-set.

Let
\[
\mathbb{Z}_{p}^{d}:=\left\{\va=(a_{1}, \ldots, a_{d})\in\mathbb{Z}^{d}: a_{j}\in\mathbb{Z}_{p}, j=1, \ldots, d\right\}.
\]
 Suppose that $\va=(a_{1}, \ldots, a_{d})\in \mathbb{Z}^{d}_{p}$ and $\epsilon=(\epsilon_{1},
\ldots, \epsilon_{d-1})\in \left\{0, 1\right\}^{d-1}$. We set
\begin{equation}\label{eq:pset}
\mathcal{P}^{\va, \epsilon}_{d, p}:=\left\{\mathbf{x}_{j}^{\va,\epsilon}:j\in\mathbb{Z}_{p}\right\}
\end{equation}
where
$$
\mathbf{x}_{j}^{\va,\epsilon}:=\Big(\left\{\frac{a_{1}j}{p}\right\}, \left\{\frac{a_{1}'j+a_{2}j^{2}}{p}\right\}, \ldots, \left\{
\frac{\sum_{h=1}^{d-1}a_{h}'j^{h}+a_{d}j^{d}}{p}\right\}\Big)\in [0,1)^d
$$
and
 $a_{k}'=\epsilon_{k}a_{k}, k=1, \ldots, d-1$.
 We call $\mathcal{P}^{\va, \epsilon}_{d, p}$ as the $p$-set associating with
the parameter $\va$ and $\epsilon$. If we take $\va=(1, \ldots, 1)$ and $\epsilon=(0, \ldots, 0)$, then $\mathcal{P}^{\va, \epsilon}_{d, p}$ is reduced to the classical $p$-set.

 The $p$-set $\mathcal{P}^{\va, \epsilon}_{d, p}$ associating with the parameters $\va, \epsilon$ is more flexible. Given the prime number $p$, one can generate various point sets by changing the parameters $\va$ and $\epsilon$ with presenting an option set when the cardinality  $p$ is given.

Note that the cardinality of both $\mathcal{P}^{\va, \epsilon}_{d, p}$ and $\mathcal{P}_{d, p}$ is prime.
Since the  distance between adjacent prime can be very large, the cardinality of  $p$-set does not change smoothly.
Using the set $\mathcal{P}^{\va, \epsilon}_{d, p}$, we next present a set with the cardinality being   odd number. Suppose that $m\in 2\mathbb{Z}$ is given. The Goldbach conjecture, which is
one of the best-known unsolved problem in number theory, says that $m$ can be written as the sum of two primes, i.e., $m=p+q$ where $p$ and $q$
are prime numbers. One has verified the conjecture up to $m\leq 4\cdot 10^{14}$ which is enough for practical application. We next suppose that $m=p+q$
with $p$ and $q$ being prime numbers. We set
\begin{equation}\label{eq:sp1}
\mathcal{L}_{p, q}:=\left\{
\begin{array}{c}
\mathcal{P}_{d,p}\cup\mathcal{P}_{d,q}, \quad p\neq q\\
\mathcal{P}^{\va, \epsilon'}_{d, p}\cup
\mathcal{P}^{\vb, \epsilon''}_{d, p}, \quad p=q,
\end{array}\right.
\end{equation}
where $\mathcal{P}^{\va, \epsilon'}_{d, p}$ and $\mathcal{P}^{\vb, \epsilon''}_{d, p}$ are the $p$-sets that we have defined above and $\va,\vb\in \mathbb{Z}_{p}^{d}, \epsilon',\epsilon''\in \left\{0,1\right\}^{d-1}$. We call $\mathcal{L}_{p, q}$ the  $(p,q)$-set.  As shown later, $\mathcal{P}_{d,p}\cap \mathcal{P}_{d,q}=\left\{(0,\ldots,0)\right\}$ provided $p\neq q$. We can choose $\va,\vb, \epsilon'$ and $\epsilon''$  so that $\mathcal{P}^{\va, \epsilon'}_{d, p}\cap
\mathcal{P}^{\vb, \epsilon''}_{d, p}=\left\{(0,\ldots,0)\right\}$. Hence, under the assumption of Goldbach conjecture, for any odd number, says $m-1$, there exist $p,q$ so that $|\mathcal{L}_{p, q}|=p+q-1=m-1$.

We would like to mention the following point sets with cardinality $p^2$ \cite{pset,HW} :
\begin{equation}\label{eq:p2set}
\begin{aligned}
{\mathcal Q}_{p^2,d}&=\left\{\vz_j : j=0,\ldots,p^2-1\right\},\, \mathbf{z}_{j}=\Big(\left\{\frac{j}{p^2}\right\}, \left\{\frac{j^{2}}{p^2}\right\}, \ldots, \left\{\frac{j^{d}}{p^2}\right\}\Big)\in [0,1)^d;\\
{\mathcal R}_{p^2,d}&=\left\{\vz_{j,k} : j,k=0,\ldots,p-1\right\},\, \vz_{j,k}=\Big(\left\{\frac{k}{p} \right\},\left\{\frac{jk}{p} \right\},\ldots,\left\{\frac{j^{d-1}k}{p} \right\}\Big)\in [0,1)^d.
\end{aligned}
\end{equation}
The weighted star discrepancy of ${\mathcal Q}_{p^2,d}$ and ${\mathcal R}_{p^2,d}$ is given in \cite{Dick2}. Using a similar method with above, we can  generalize   ${\mathcal Q}_{p^2,d}$ and ${\mathcal R}_{p^2,d}$ to
 ${\mathcal Q}_{p^2,d}^{\va,\epsilon}$ and ${\mathcal R}_{p^2,d}^{\va,\epsilon}$, respectively. We will introduce it in Section 2.3 in detail.

\subsection{Organization}
In Section 2, we present the upper bounds of  the exponential sums over  $\mathcal{P}^{\va, \epsilon}_{d, p}$ and $\mathcal{L}_{p, q}$. Particularly, we present the condition under which
$\abs{\mathcal{L}_{p, q}}=p+q-1$ and also  prove  that
$\mathcal{P}^{\va, \epsilon}_{d, p}\cap \mathcal{P}^{\vb, \epsilon}_{d, p}=\left\{(0,\ldots,0)\right\}$ when $\mathcal{P}^{\va, \epsilon}_{d, p}\neq \mathcal{P}^{\vb, \epsilon}_{d, p}$. We furthermore consider the generalization  of the point sets ${\mathcal Q}_{p^2,d}$ and ${\mathcal R}_{p^2,d}$ and present the upper bounds of exponential sums over the new sets.  The results in Section 2 show that
  the  point sets presented in this paper  have many potential applications in various  areas.
In Section 3, we choose  $\mathcal{L}_{p, q}$ as a deterministic sampling set  for the recovery of sparse trigonometric polynomials and then show their performance.

\section{The exponential sums over $\mathcal{P}^{\va, \epsilon}_{d, p}$ and $\mathcal{L}_{p, q}$}

The aim of this section is to present the exponential sums over $\mathcal{P}^{\va, \epsilon}_{d, p}$ and $\mathcal{L}_{p, q}$. To this end, we first introduce the well-known Weil's formula, which plays a key role in our proof.
\begin{theorem}\label{theorem:re1}
\cite{weil} Suppose that $p$ is a prime number. Suppose $f(x)=\sum_{h=1}^{d}m_{h}x^{h}$ with $m_{h}\in\mathbb{Z}$ $(h=1, \ldots, d)$ and there is a $j\in \left\{1,2, \ldots d\right\}$,
satisfying $p \nmid m_{j}$. Then
\begin{equation}
\Big|\sum_{x=1}^{p}e^{\frac{2\pi \mathbf{i} f(x)}{p}}\Big|\leq (d-1)\sqrt{p}.\nonumber
\end{equation}
\end{theorem}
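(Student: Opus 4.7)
The plan is to deduce this bound from Weil's Riemann hypothesis for curves over finite fields, applied to the Artin--Schreier cover attached to $f$. Since the statement is classical and attributed to \cite{weil}, the outline below describes the route rather than reproducing the technical details.

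First I would reduce to the case where $f$ has a unit leading coefficient modulo $p$. If $p\mid m_d$ but $p\nmid m_j$ for some $1\le j<d$, then modulo $p$ the polynomial has effective degree $d'\le d$; once the bound $(d'-1)\sqrt p$ is proved in that setting, the weaker bound $(d-1)\sqrt p$ stated in the theorem follows at once. A translation $x\mapsto x+c$ handles the remaining subcase where $p\mid d'$, ensuring that some monomial $x^e$ with $\gcd(e,p)=1$ appears with a coefficient coprime to $p$; this normalization is what is needed to control ramification in the next step.

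Next I would associate to $f$ the Artin--Schreier curve $C_f\colon y^p-y=f(x)$ over $\overline{\FF_p}$. Riemann--Hurwitz applied to the degree-$p$ cover $C_f\to\mathbb{A}^1$, ramified only over $\infty$ with ramification dictated by $d$, yields genus $g=(p-1)(d-1)/2$. The additive group $\FF_p$ acts on $C_f$ by $y\mapsto y+a$, and decomposing $H^1(C_f)$ into isotypic components for the nontrivial additive characters $\psi\colon\FF_p\to\C^{*}$ exhibits, for each such $\psi$, a piece of dimension $d-1$ on which the trace of Frobenius coincides, up to sign, with $\sum_{x\in\FF_p}\psi(f(x))$. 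The Riemann hypothesis for curves forces every Frobenius eigenvalue on $H^1(C_f)$ to have absolute value $\sqrt p$; summing $d-1$ such eigenvalues then gives the claimed estimate for the special character $\psi(t)=e^{2\pi\mathbf i t/p}$.

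The hard part is the cohomological bookkeeping: one must justify the isotypic decomposition and identify the trace on the $\psi$-piece with the character sum in the theorem. If one prefers to avoid algebraic geometry entirely, Stepanov's polynomial method provides an elementary alternative: construct auxiliary polynomials of controlled degree vanishing to high order on the solution set of $y^p-y=f(x)$ over extensions of $\FF_p$, convert the ensuing solution counts into a bound on $\sum_x\psi(f(x))$ via orthogonality of characters, and recover the same estimate without any algebraic geometry. I would choose whichever approach better matches the tone of the remainder of the paper; for what follows, only the stated bound is actually invoked.
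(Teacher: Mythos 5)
The paper contains no proof of this theorem: it is imported verbatim from Weil's 1948 paper via the citation \cite{weil}, and nothing in the text depends on how it is proved. So your sketch can only be judged on its own terms, and in outline it is the standard argument: the Artin--Schreier cover $y^p-y=f(x)$, the genus formula $g=(p-1)(d-1)/2$ when $\gcd(\deg f,p)=1$, the decomposition of $H^1$ under the translation action of $\FF_p$ into $(d-1)$-dimensional $\psi$-isotypic pieces, and the Riemann hypothesis for curves forcing each Frobenius eigenvalue to have modulus $\sqrt p$; equivalently one can argue through the $L$-function $L(\psi,T)$ being a polynomial of degree $d-1$. The Stepanov alternative you mention is also a legitimate elementary substitute. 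All of that is fine as a route map.

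There is, however, one step that would fail as written: a translation $x\mapsto x+c$ cannot ``handle the remaining subcase where $p\mid d'$,'' because translation changes neither the degree of $f$ modulo $p$ nor its leading coefficient. The correct device is the Artin--Schreier reduction: for $x\in\FF_p$ and $g\in\FF_p[x]$ one has $g(x)^p-g(x)=0$, so the sum is unchanged under $f\mapsto f-(g^p-g)$; taking $g=ax^{d'/p}$ with $a$ the leading coefficient (using $a^p=a$ in $\FF_p$) replaces the monomial $ax^{d'}$ by $ax^{d'/p}$ and strictly lowers the degree, and iterating produces a representative whose degree is prime to $p$ --- which is exactly the hypothesis your genus computation needs. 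This repair also exposes a caveat your sketch (and, strictly speaking, the quoted statement) glosses over: the reduction can terminate at a constant, e.g.\ $f=x^p-x$, where the sum equals $p$; concretely, for $p=2$, $d=2$, $f(x)=x^2+x$ the hypothesis $p\nmid m_1$ holds, yet the sum is $2>\sqrt2=(d-1)\sqrt p$. The theorem is correct under the implicit standing assumption $d<p$ (then $p\nmid m_j$ for some $j$ forces the reduced degree $d'$ to satisfy $1\le d'<p$, hence $\gcd(d',p)=1$, and $(d'-1)\sqrt p\le(d-1)\sqrt p$ gives the claim), which is the regime in which the paper applies it. With the translation step replaced by the Artin--Schreier substitution and this degenerate case excluded, your outline is sound.
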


\subsection{The exponential sum over $\mathcal{P}^{\va, \epsilon}_{d, p}$ }
Recall that
\[
\mathcal{P}^{\va, \epsilon}_{d, p}\,\,:=\,\,\left\{\mathbf{x}_{j}^{\va,\epsilon}:j\in\mathbb{Z}_{p}\right\}
\]
and
\[
\mathbf{x}_{j}^{\va,\epsilon}=\Big(\left\{\frac{a_{1}j}{p}\right\}, \left\{\frac{a_{1}'j+a_{2}j^{2}}{p}\right\}, \ldots, \left\{
\frac{\sum_{h=1}^{d-1}a_{h}'j^{h}+a_{d}j^{d}}{p}\right\}\Big)\in [0,1)^d
\]
where $\va=(a_1,\ldots,a_d)\in[1, p-1]^{d}\cap\mathbb{Z}^{d}$, $a_j'=\epsilon_ja_j$ and
$\epsilon=(\epsilon_{1},\ldots, \epsilon_{d-1})\in \left\{0, 1\right\}^{d-1}$.
Note that $|\mathcal{P}^{\va, \epsilon}_{d, p}|=p$.  We next show the exponential sum formula over
$\mathcal{P}^{\va, \epsilon}_{d, p}$.
\begin{theorem}\label{th:weil1}
For any $\mathbf{k}\in[-p+1, p-1]^{d}\cap\mathbb{Z}^{d}$ and $\mathbf{k}\neq 0$, we have
\begin{equation}\label{eq:sum1}
\left|\sum_{\mathbf{x}\in \mathcal{P}^{\va, \epsilon}_{d, p}}\exp(2\pi\mathbf{i}\mathbf{k}\cdot \mathbf{x})\right|
=\Big|\sum_{j=0}^{p-1}\exp(2\pi\mathbf{i}\mathbf{k}\cdot\mathbf{x}_{j}^{\va,\epsilon})\Big|\leq (d-1)\sqrt{p}.
\end{equation}
\end{theorem}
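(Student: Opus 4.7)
The plan is to rewrite the sum on the left-hand side of \eqref{eq:sum1} as a Weil-type exponential sum of a single polynomial in $j$ modulo $p$, and then verify the hypothesis of Theorem~\ref{theorem:re1}. Since $\exp(2\pi\mathbf{i}\{y\})=\exp(2\pi\mathbf{i} y)$ for every real $y$, the fractional parts inside $\mathbf{x}_j^{\va,\epsilon}$ disappear, and the exponent becomes $\frac{2\pi\mathbf{i}}{p}f(j)$, where
\[
f(j)\ :=\ \sum_{s=1}^{d} k_s\!\left(\sum_{h=1}^{s-1}a_h'j^h+a_sj^s\right)\in\mathbb{Z}[j].
\]
So the task reduces to bounding $\bigl|\sum_{j=0}^{p-1}\exp(2\pi\mathbf{i} f(j)/p)\bigr|$.

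Next I would collect terms in $f(j)$ by powers of $j$. A direct inspection gives coefficient $k_d a_d$ for $j^d$, and coefficient $a_h\bigl(k_h+\epsilon_h\sum_{s=h+1}^{d}k_s\bigr)$ for $j^h$ with $1\le h\le d-1$ (using $a_h'=\epsilon_h a_h$). The key step is to locate a power whose coefficient is not divisible by $p$. Let $h^{*}$ denote the largest index for which $k_{h^{*}}\ne 0$; such an index exists because $\mathbf{k}\ne 0$. For $s>h^{*}$ we have $k_s=0$, so the coefficient of $j^{h^{*}}$ collapses to $a_{h^{*}}k_{h^{*}}$ (and equals $a_d k_d$ in the case $h^{*}=d$). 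Since $a_{h^{*}}\in[1,p-1]$ and $k_{h^{*}}\in[-p+1,p-1]\setminus\{0\}$, neither factor is divisible by the prime $p$, hence $p\nmid a_{h^{*}}k_{h^{*}}$.

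With this non-vanishing leading-type coefficient in hand, Theorem~\ref{theorem:re1} applied to $f$ yields the bound $(d-1)\sqrt{p}$, which is exactly \eqref{eq:sum1}.

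The only delicate point is the bookkeeping in the middle step: the auxiliary parameters $a_h'=\epsilon_h a_h$ inject lower-degree cross terms from every coordinate into the polynomial, so one must be careful to verify that choosing the \emph{largest} nonzero index $h^{*}$ (rather than, say, the smallest) makes these cross terms vanish and leaves a coefficient that is manifestly coprime to $p$. Once that observation is made, the rest is a direct appeal to Weil's theorem.
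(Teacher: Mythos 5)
Your proposal is correct and is essentially the paper's own argument: both rewrite the sum as $\sum_{j=0}^{p-1}\exp(2\pi\mathbf{i}g(j)/p)$ for a polynomial whose coefficient of $j^{\ell}$ is $c_{\ell}=k_{\ell}a_{\ell}+(k_{\ell+1}+\cdots+k_{d})a_{\ell}'$, take the largest index $j_0$ with $k_{j_0}\neq 0$ so that $c_{j_0}=k_{j_0}a_{j_0}$ is coprime to $p$ (using $a_{j_0}\in[1,p-1]$ and $|k_{j_0}|\leq p-1$), and invoke Weil's bound. Your closing remark about why the \emph{largest} nonzero index is the right choice is exactly the observation underlying the paper's one-line verification that $p\nmid c_{j_0}$.
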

\begin{proof}
Set
\begin{equation*}
g(j)=\sum_{\ell=1}^{d}c_{\ell}j^{\ell}
\end{equation*}
where $c_{\ell}=k_{\ell}a_{\ell}+k_{\ell+1}a_{\ell}'+\cdots+k_{d}a_{\ell}'$.
We set  $j_0:=\max\left\{\ell:k_{\ell}\neq 0\right\}$. Then $c_{j_0}=k_{j_0}a_{j_0}$ and we have $p\nmid c_{j_0}$.
According to  Theorem \ref{theorem:re1}, we obtain that
\begin{equation*}
\Big|\sum_{j=0}^{p-1}\exp(2\pi\mathbf{i}\mathbf{k}\cdot\mathbf{x}_{j}^{\va,\epsilon})\Big|
 =\Big|\sum_{j=0}^{p-1}\exp\Big(2\pi\mathbf{i}\frac{g(j)}{p}\Big)\Big|\leq (d-1)\sqrt{p}.
\end{equation*}

\end{proof}

\subsection{The exponential sum over  $\mathcal{L}_{p, q}$}
To this end, we  consider the cardinality of $\mathcal{L}_{p,q}$. A simple observation is
that $\abs{\mathcal{L}_{p,q}}\leq p+q-1$. We would like to present the condition under which
$\abs{\mathcal{L}_{p,q}}= p+q-1$.
 We first consider the case where $p\neq q$.
\begin{theorem}
Suppose that $p$ and $q$ are two distinct prime numbers. Then $|\mathcal{L}_{p,q}|=p+q-1$.
\end{theorem}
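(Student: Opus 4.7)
The plan is to reduce the theorem to showing $\mathcal{P}_{d,p}\cap\mathcal{P}_{d,q}=\{(0,\ldots,0)\}$. Since $\mathcal{L}_{p,q}=\mathcal{P}_{d,p}\cup\mathcal{P}_{d,q}$ when $p\neq q$, inclusion-exclusion gives
\[
|\mathcal{L}_{p,q}| \;=\; |\mathcal{P}_{d,p}| + |\mathcal{P}_{d,q}| - |\mathcal{P}_{d,p}\cap\mathcal{P}_{d,q}| \;=\; p+q-|\mathcal{P}_{d,p}\cap\mathcal{P}_{d,q}|,
\]
so the conclusion $|\mathcal{L}_{p,q}|=p+q-1$ is equivalent to the intersection having exactly one element. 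Since the origin $(0,\ldots,0)$ lies in both sets (take $j=0$), it suffices to show no other common point exists.

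Suppose $\mathbf{x}_j\in\mathcal{P}_{d,p}$ coincides with $\mathbf{x}_k\in\mathcal{P}_{d,q}$ for some $j\in\mathbb{Z}_p$ and $k\in\mathbb{Z}_q$. I will only use the very first coordinate of the equality. Since $0\le j\le p-1$ we have $\{j/p\}=j/p$, and similarly $\{k/q\}=k/q$, so $j/p=k/q$, i.e.\ $jq=kp$. Because $p$ and $q$ are distinct primes, $\gcd(p,q)=1$, and $p\mid jq$ forces $p\mid j$; the bound $0\le j\le p-1$ then yields $j=0$, and consequently $k=0$. Hence the only common point is the origin, completing the proof.

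There is no real obstacle here: the argument is a one-line coprimality observation using just the first coordinate of the $p$-set definition. The only subtlety worth flagging is to note explicitly that $j,k$ already lie in the fundamental range so the fractional parts drop away, which is what makes the first coordinates force $jq=kp$ exactly rather than merely modulo something. Higher-degree coordinates are not needed for this particular statement.
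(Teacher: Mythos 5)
Your proof is correct and follows essentially the same route as the paper's: both reduce the claim to showing $\mathcal{P}_{d,p}\cap\mathcal{P}_{d,q}=\{(0,\ldots,0)\}$ and then use only the first coordinate, noting that $j/p=k/q$ holds exactly (not merely modulo $1$) and cross-multiplying to $jq=kp$. If anything, your closing coprimality step ($\gcd(p,q)=1$ and $p\mid jq$ force $p\mid j$, hence $j=0$) is cleaner than the paper's somewhat garbled ``$j\mid k$ and $k\mid j$, so $j=k$ and $p=q$'' contradiction, and your explicit remark that distinct first coordinates make the sets have full cardinality $p$ and $q$ tidies up a point the paper leaves implicit.
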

\begin{proof}
According to (\ref{eq:sp1}), to this end, we just need show that
\[
\mathcal{P}_{d, p}\cap\mathcal{P}_{d, q}=\left\{(0, \ldots, 0)\right\}.
\]
 We prove it by contradiction. Assume that $\mathcal{P}_{d, p}\cap\mathcal{P}_{d, q}\neq\left\{(0, \ldots, 0)\right\}$, and then there exists
$j\in\mathbb{Z}_{p}^{*}:=\mathbb{Z}_{p}\setminus\left\{0\right\}$ and $k\in\mathbb{Z}_{q}^{*}$  so that $\left\{\frac{j^{i}}{p}\right\}=\left\{\frac{k^{i}}{q}\right\}$, $i=1, \ldots, d$.
Particularly, we have $\frac{j}{p}=\frac{k}{q}$, which is equivalent to $jq=kp$. Since $p$ and $q$ are different prime numbers, $j\in\mathbb{Z}_{p}^{*}$ and $k\in\mathbb{Z}_{q}^{*}$, we have $j|k$ and $k|j$, which means $j=k$ and hence  $p=q$.  A contradiction.
\end{proof}

We next consider the case where $p=q$, i.e., ${\mathcal L}_{p,q}=\mathcal{P}^{\va, \epsilon'}_{d, p}\cup
\mathcal{P}^{\vb, \epsilon''}_{d, p}$. For the case where $\epsilon'=\epsilon''$,
we have
\begin{theorem}\label{th:deng}
Suppose that $\epsilon\in\left\{0, 1\right\}^{d-1}$  is a fixed vector and $\va=(a_1,\ldots,a_d)\in\mathbb{Z}_{p}^{d}$, $\vb=(b_1,\ldots,b_d)\in\mathbb{Z}_{p}^{d}$.
\begin{enumerate}
\item $\mathcal{P}^{\va, \epsilon}_{d, p}=\mathcal{P}^{\vb, \epsilon}_{d, p}$ if and only if there exists $c\in\mathbb{Z}_{p}^{*}$
such that
\[
b_{j}c^{j}\equiv a_{j}\quad (\mathrm{mod}\quad p) \quad for\quad  j=1,\ldots,d.
\]
\item  If
$\mathcal{P}^{\va, \epsilon}_{d, p}\neq\mathcal{P}^{\vb, \epsilon}_{d, p}$, then $\mathcal{P}^{\va, \epsilon}_{d, p}\cap\mathcal{P}^{\vb, \epsilon}_{d, p}=\left\{(0, \ldots, 0)\right\}$.
\end{enumerate}
\end{theorem}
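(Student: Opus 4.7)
The plan is to view each coordinate of $\mathbf{x}_j^{\va,\epsilon}$ as a polynomial in $j$ over $\mathbb{F}_p$. Concretely, I will write the $\ell$-th coordinate as $f_\ell^{\va,\epsilon}(j) = \sum_{h=1}^{\ell-1}\epsilon_h a_h j^h + a_\ell j^\ell$, so that the whole question becomes one of comparing two families of polynomial evaluations modulo $p$. I will assume throughout that $\va,\vb\in[1,p-1]^d$ (consistent with the convention used in Theorem \ref{th:weil1}), so in particular $a_1,b_1\in\mathbb{Z}_p^\ast$ and the two sets each have cardinality $p$.

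For the ``if'' direction of part (1), given $c\in\mathbb{Z}_p^\ast$ with $b_j c^j\equiv a_j\pmod p$ for every $j$, I would substitute $j\mapsto cj$ into $f_\ell^{\vb,\epsilon}$; factoring $c^h$ out of each term and using $\epsilon_h b_h c^h\equiv\epsilon_h a_h\pmod p$, one obtains $f_\ell^{\vb,\epsilon}(cj)\equiv f_\ell^{\va,\epsilon}(j)\pmod p$ for all $\ell$, so $\mathbf{x}_{cj}^{\vb,\epsilon}=\mathbf{x}_j^{\va,\epsilon}$ and the two sets coincide since $j\mapsto cj$ is a bijection on $\mathbb{Z}_p$. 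For the ``only if'' direction, equality of the two sets supplies a bijection $\sigma\colon\mathbb{Z}_p\to\mathbb{Z}_p$ with $\mathbf{x}_j^{\va,\epsilon}=\mathbf{x}_{\sigma(j)}^{\vb,\epsilon}$; the first-coordinate identity $a_1 j\equiv b_1\sigma(j)\pmod p$ forces $\sigma(j)\equiv cj\pmod p$ with $c=b_1^{-1}a_1\in\mathbb{Z}_p^\ast$, and in particular $a_1\equiv b_1 c\pmod p$. I would then induct on $\ell$: assuming $a_h\equiv b_h c^h\pmod p$ for $h<\ell$, the $\ell$-th coordinate identity reads $f_\ell^{\va,\epsilon}(j)\equiv f_\ell^{\vb,\epsilon}(cj)\pmod p$ for every $j\in\mathbb{Z}_p$; the inductive hypothesis (together with $a_h'=\epsilon_h a_h$, $b_h'=\epsilon_h b_h$) cancels all the subdiagonal terms, leaving $(a_\ell-b_\ell c^\ell)j^\ell\equiv 0\pmod p$ for every $j\in\mathbb{Z}_p$. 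Since $\ell\le d<p$ in the relevant regime, this monomial identity forces $a_\ell\equiv b_\ell c^\ell\pmod p$, closing the induction.

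For part (2), the hypothesis gives a single pair $(j,k)\in\mathbb{Z}_p\times\mathbb{Z}_p$ with $\mathbf{x}_j^{\va,\epsilon}=\mathbf{x}_k^{\vb,\epsilon}$, and I want to show $j=k=0$. The first-coordinate equation $a_1 j\equiv b_1 k\pmod p$ combined with $a_1,b_1\in\mathbb{Z}_p^\ast$ immediately rules out the mixed cases in which exactly one of $j,k$ is zero; so it remains to derive a contradiction when both $j,k\in\mathbb{Z}_p^\ast$. Setting $c=kj^{-1}\pmod p$, I would rerun the same inductive argument as in (1), but now using just this one equation per coordinate: division by $j^\ell\not\equiv 0$ is legal, so the same cancellation of lower-order $\epsilon_h$-terms yields $a_\ell\equiv b_\ell c^\ell\pmod p$ for every $\ell$. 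Part (1) then implies $\mathcal{P}^{\va,\epsilon}_{d,p}=\mathcal{P}^{\vb,\epsilon}_{d,p}$, contradicting the hypothesis of (2). The main obstacle is purely bookkeeping: one must make sure that the $\epsilon_h$-labeled ``cross terms'' cancel at each inductive step, which works precisely because $a_h'$ and $b_h'$ inherit the same $\epsilon_h$, so the inductive hypothesis $a_h\equiv b_h c^h$ propagates verbatim into $a_h'\equiv b_h' c^h$; once this observation is in hand, the polynomial-identity argument (a nonzero polynomial of degree $<p$ cannot vanish on all of $\mathbb{F}_p$) does the rest uniformly in both parts.
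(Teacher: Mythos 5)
Your proof is correct and follows essentially the same route as the paper's: the ``if'' direction via the substitution $j\mapsto cj$, the ``only if'' direction by matching points through the first coordinate and inductively cancelling the $\epsilon_h$ cross-terms to obtain $a_\ell\equiv b_\ell c^\ell \pmod p$, and part (2) by taking a common nonzero point, forming $c=kj^{-1}$, and contradicting part (1). The only deviations are cosmetic: you assume $\va,\vb\in[1,p-1]^d$ while the theorem allows zero entries (the paper's single-matched-pair version of the induction needs no such restriction, since it never divides by any $a_h$), and your appeal to a nonzero polynomial of degree $<p$ not vanishing on $\mathbb{F}_p$ is overkill --- evaluating $(a_\ell-b_\ell c^\ell)j^\ell\equiv 0$ at $j=1$ already closes the induction.
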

\begin{proof}
(1) We first suppose that there exists $c\in\mathbb{Z}_{p}^{*}$ such that $b_{j}c^{j}\equiv a_{j}\,\,(\mathrm{mod} \quad p)$ for $j\in \left\{1,\ldots,d\right\}$. Recall that
$$
\mathcal{P}^{\va, \epsilon}_{d, p}=\left\{\mathbf{x}_{j}^{\va,\epsilon} : j\in\mathbb{Z}_{p}\right\},
$$
where
\[
\mathbf{x}_{j}^{\va,\epsilon}=\Big(\left\{\frac{a_{1}j}{p}\right\}, \left\{\frac{a_{1}'j+a_{2}j^{2}}{p}\right\}, \ldots, \left\{
\frac{\sum_{h=1}^{d-1}a_{h}'j^{h}+a_{d}j^{d}}{p}\right\}\Big).
\]
For any $j_{0}\in \mathbb{Z}_{p}$, we take $k_{0}\equiv cj_{0}\quad(\mathrm{mod}\quad p)$. Then
\begin{eqnarray}
\mathbf{x}_{j_{0}}^{\va,\epsilon} &=& \Big(\left\{\frac{a_{1}j_{0}}{p}\right\}, \left\{\frac{a_{1}'j_{0}+a_{2}j_{0}^{2}}{p}\right\}, \ldots, \left\{
\frac{\sum_{h=1}^{d-1}a_{h}'j_{0}^{h}+a_{d}j_{0}^{d}}{p}\right\}\Big)\nonumber\\
&=& \Big(\left\{\frac{b_{1}cj_{0}}{p}\right\}, \left\{\frac{b_{1}'cj_{0}+b_{2}c^{2}j_{0}^{2}}{p}\right\}, \ldots, \left\{
\frac{\sum_{h=1}^{d-1}b_{h}'c^{h}j_{0}^{h}+b_{d}c^{d}j_{0}^{d}}{p}\right\}\Big)\nonumber\\
&=& \Big(\left\{\frac{b_{1}k_{0}}{p}\right\}, \left\{\frac{b_{1}'k_{0}+b_{2}k_{0}^{2}}{p}\right\}, \ldots, \left\{
\frac{\sum_{h=1}^{d-1}b_{h}'k_{0}^{h}+b_{d}k_{0}^{d}}{p}\right\}\Big)\nonumber\\
&=& \mathbf{x}_{k_{0}}^{\vb,\epsilon},\nonumber
\end{eqnarray}
which implies that
$$\mathcal{P}^{\va, \epsilon}_{d, p}\,\,\subseteq\,\, \mathcal{P}^{\vb, \epsilon}_{d, p}.$$
Here we use $b_{j}'c^{j}\equiv a_{j}'\,\, (\mathrm{mod}\,\, p)$ which follows from $b_{j}c^{j}\equiv a_{j}\,\,(\mathrm{mod}\,\, p)$.

Since $p$ is a prime number, there exists $c^{-1}\in\mathbb{Z}_{p}^{*}$ so that $c^{-1}c\equiv 1\quad(\mathrm{mod}\,\, p)$. Then we have
 $b_{j}\equiv  a_{j} c^{-j}\quad(\mathrm{mod}\quad p), j=1, \ldots, d$. Then, similarly, for
any $j_{0}\in\mathbb{Z}_{p}$,
$$
\mathbf{x}_{j_{0}}^{\vb,\epsilon}\,\,=\,\,\mathbf{x}_{c^{-1}j_{0}}^{\va,\epsilon},
$$
which implies that
$$
\mathcal{P}^{\vb, \epsilon}_{d, p}\,\,\subseteq\,\, \mathcal{P}^{\va, \epsilon}_{d, p}.
$$
Then we arrive at
$$
\mathcal{P}^{\va, \epsilon}_{d, p}=\mathcal{P}^{\vb, \epsilon}_{d, p}.
$$
We next suppose that $\mathcal{P}^{\va, \epsilon}_{d, p}=\mathcal{P}^{\vb, \epsilon}_{d, p}$. Then there exist
$j_{0}, k_{0}\in\mathbb{Z}_{p}^{*}$ so that $\mathbf{x}_{j_{0}}^{\va}=\mathbf{x}_{k_{0}}^{\vb}$, i.e.,
\begin{equation}\label{eq:theorem11}
a_{1}j_{0}\equiv b_{1}k_{0}\quad (\mathrm{mod} \quad p),
\end{equation}
\begin{equation}\label{eq:theorem12}
a_{1}'j_{0}+a_{2}j_{0}^{2}\equiv b_{1}'k_{0}+b_{2}k_{0}^{2}\quad (\mathrm{mod} \quad p),
\end{equation}
\begin{eqnarray}
&&\vdots\nonumber
\end{eqnarray}
\begin{equation}
\sum_{h=1}^{d-1}a_{h}'j_{0}^{h}+a_{d}j_{0}^{d}\equiv \sum_{h=1}^{d-1}b_{h}'k_{0}^{h}+b_{d}k_{0}^{d}\quad (\mathrm{mod} \quad p).\nonumber
\end{equation}
We set $c:\equiv k_{0}j_{0}^{-1}\quad (\mathrm{mod} \quad p)$, where $j_{0}^{-1}\in\mathbb{Z}_{p}$ so that $j_{0}^{-1}j_{0}\equiv 1\quad (\mathrm{mod} \quad p)$. Then (\ref{eq:theorem11}) implies that
$a_{1}\equiv b_{1}c\quad (\mathrm{mod} \quad p)$. Combining (\ref{eq:theorem11}) and (\ref{eq:theorem12}), we have $a_{2}j_{0}^{2}\equiv b_{2}k_{0}^{2}\quad (\mathrm{mod} \quad p)$ which implies that $a_{2}\equiv b_{2}c^{2}\quad (\mathrm{mod} \quad p)$. Similarly, we can
obtain that $a_{j}\equiv b_{j}c^{j}\quad (\mathrm{mod} \quad p)$ for $3\leq j\leq d$.

(2) We prove it by contradiction. Assume that $\mathcal{P}_{d, p}\cap\mathcal{P}_{d, q}\neq\left\{(0, \ldots, 0)\right\}$, and then there exist
$j_{0}, k_{0}\in\mathbb{Z}_{p}^{*}$ so that $\mathbf{x}_{j_{0}}^{\va,\epsilon}=\mathbf{x}_{k_{0}}^{\vb,\epsilon}$. Similarly with the above proof, we can
find a $c:\equiv k_{0}j_{0}^{-1}\quad (\mathrm{mod} \quad p)$ so that $a_{j}\equiv b_{j}c^{j}\quad (\mathrm{mod} \quad p)$ for $j=1, \ldots, d$.
It leads to $\mathcal{P}^{\va, \epsilon}_{d, p}=\mathcal{P}^{\vb, \epsilon}_{d, p}$ by (1) of Theorem 2.4, which is impossible by the assumption in (2).
\end{proof}

We next consider the case where  $\epsilon' \neq \epsilon''$.
\begin{theorem}\label{th:cap}
Suppose that $\epsilon', \epsilon''\in\left\{0, 1\right\}^{d-1}$ with $\epsilon'\neq\epsilon''$.
Set
\[
{\mathcal Z}\,\,:=\,\, \left\{j: \epsilon_{j}'\neq \epsilon_{j}'' \,\,\text{and}\,\, a_{j}^{2}+b_{j}^{2}\neq 0, 1\leq j\leq d-1\right\},
\]
and
\[
\ell_0:=\min \left\{j: j\in \mathcal{Z}\right\}.
\]
Then the followings hold.
\begin{enumerate}
\item $\mathcal{P}^{\va, \epsilon'}_{d, p}=\mathcal{P}^{\vb, \epsilon''}_{d, p}$ if and only if there exists a
 $c\in\mathbb{Z}_{p}^{*}$ so that
\begin{equation}\label{eq:cond1}
\begin{aligned}
& a_{j}\equiv b_{j}c^{j}\quad (\mathrm{mod}\quad p) \quad for \quad \epsilon_{j}'=\epsilon_{j}''\quad or\quad j=d, \\
& a_{j}=b_{j}=0\quad for\quad \epsilon_{j}'\neq\epsilon_{j}'',
\end{aligned}
\end{equation}

 where $\va, \vb\in\mathbb{Z}_{p}^{d}\setminus\left\{(0, \ldots, 0)\right\}$ with $a_{1}\neq 0$ being given.
\item Assume that $\mathcal{P}^{\va, \epsilon'}_{d, p}\neq\mathcal{P}^{\vb, \epsilon''}_{d, p}$ where $\va, \vb\in\mathbb{Z}_{p}^{d}$.
If ${\mathcal Z}=\emptyset $ then we  have
\[
\mathcal{P}^{\va, \epsilon'}_{d, p}\cap\mathcal{P}^{\vb, \epsilon''}_{d, p}=\left\{(0,\ldots,0)\right\}.
\]
If ${\mathcal Z} \neq \emptyset$, then
$|{\mathcal{P}^{\va, \epsilon'}_{d, p}\cap\mathcal{P}^{\vb, \epsilon''}_{d, p}}|\leq r+1$, where
\[
r=\min\left\{j: a_{j}^{2}+b_{j}^{2}\neq 0\right\}.
\]

\item
 Assume that ${\mathcal Z}\neq \emptyset$, and $a_1\neq 0$.
If $a_{\ell_0+1}b_{1}^{\ell_0+1}\equiv a_{1}^{\ell_0+1}b_{\ell_0+1}\quad(\mathrm{mod} \quad p)$, then $\mathcal{P}^{\va, \epsilon'}_{d, p}\cap\mathcal{P}^{\vb, \epsilon''}_{d, p}=\left\{(0, \ldots, 0)\right\}$.
\end{enumerate}
\end{theorem}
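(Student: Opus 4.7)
The plan is to derive all three parts by coordinate-by-coordinate comparison of the equation $\mathbf{x}_{j_0}^{\va,\epsilon'}=\mathbf{x}_{k_0}^{\vb,\epsilon''}$, whose $k$-th coordinate reads
\[
\sum_{h=1}^{k-1}\epsilon_h'a_hj_0^h+a_kj_0^k\equiv\sum_{h=1}^{k-1}\epsilon_h''b_hk_0^h+b_kk_0^k\pmod p,
\]
and then substituting $k_0=cj_0$ to convert each coordinate into a polynomial identity (or conditional identity) in $j_0$ modulo $p$.

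For part (1), the forward direction mimics Theorem \ref{th:deng}(1): given $c$ satisfying (\ref{eq:cond1}), I verify $\mathbf{x}_{j_0}^{\va,\epsilon'}=\mathbf{x}_{cj_0}^{\vb,\epsilon''}$ coordinate by coordinate, where at each index either $\epsilon_h'=\epsilon_h''$ (so the terms match via $a_h\equiv b_hc^h$) or $a_h=b_h=0$ (so both sides vanish regardless of $\epsilon$). For the converse, coordinate 1 together with $a_1\neq 0$ forces $b_1\neq 0$ and identifies the bijection $\sigma(j)=cj$ with the single constant $c\equiv a_1b_1^{-1}\pmod p$; each coordinate equation then becomes a polynomial identity in $j_0\in\mathbb{Z}_p$ of degree $\leq d$, and matching coefficients of $j_0^h$ yields the two families $\epsilon_h'a_h\equiv\epsilon_h''b_hc^h$ (cross terms) and $a_h\equiv b_hc^h$ (leading terms), whose combination produces the dichotomy of (\ref{eq:cond1}).

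For part (2), when $\mathcal{Z}=\emptyset$ every $h$ with $\epsilon_h'\neq\epsilon_h''$ already satisfies $a_h=b_h=0$, hence $a_h'=b_h'=0$ regardless of the $\epsilon$-entry; replacing both $\epsilon',\epsilon''$ by any common $\tilde\epsilon$ that agrees with them at the remaining indices leaves both point sets unchanged, and Theorem \ref{th:deng}(2) closes the case. For $\mathcal{Z}\neq\emptyset$, coordinates $1,\dots,r-1$ vanish identically on both sides, coordinate $r$ forces $a_rj_0^r\equiv b_rk_0^r\pmod p$ with at most $\gcd(r,p-1)\leq r$ values of the ratio $c=k_0j_0^{-1}$, and the first subsequent coordinate on which the reduced equation is nontrivial (guaranteed since the two sets are distinct) takes the form $j_0^s[A+Bj_0]\equiv 0\pmod p$ with $A\neq 0$, contributing at most one nonzero $j_0$ per $c$; together with the zero vector, the intersection has at most $r+1$ elements.

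For part (3), $b_1=0$ is immediate since coordinate 1 with $a_1\neq 0$ forces $j_0=0$. For $b_1\neq 0$, set $c\equiv a_1b_1^{-1}\pmod p$ so $k_0=cj_0$, then induct on $h=1,\dots,\ell_0$ using $\epsilon_h'=\epsilon_h''$ for $h<\ell_0$ to cancel every cross term at coordinate $h$ and extract $a_h\equiv b_hc^h\pmod p$ (including $h=\ell_0$ itself, because the $\epsilon$-disagreement at $\ell_0$ enters only coordinate $\ell_0+1$ through $a_{\ell_0}'$, not coordinate $\ell_0$ through $a_{\ell_0}$). Coordinate $\ell_0+1$ then reduces to
\[
(\epsilon_{\ell_0}'-\epsilon_{\ell_0}'')a_{\ell_0}j_0^{\ell_0}+(a_{\ell_0+1}-b_{\ell_0+1}c^{\ell_0+1})j_0^{\ell_0+1}\equiv 0\pmod p,
\]
and via $c=a_1b_1^{-1}$ the hypothesis $a_{\ell_0+1}b_1^{\ell_0+1}\equiv a_1^{\ell_0+1}b_{\ell_0+1}$ is exactly $a_{\ell_0+1}\equiv b_{\ell_0+1}c^{\ell_0+1}$, which annihilates the $j_0^{\ell_0+1}$ coefficient; the surviving relation $\pm a_{\ell_0}j_0^{\ell_0}\equiv 0$ then forces $j_0=0$ once one notes that $a_{\ell_0}=0$ would imply $b_{\ell_0}=0$ via $a_{\ell_0}\equiv b_{\ell_0}c^{\ell_0}$, contradicting $\ell_0\in\mathcal{Z}$. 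The main obstacle is bookkeeping: tracking whether index $h$ enters coordinate $h$ directly (as $a_h$) or coordinate $>h$ in its $\epsilon$-weighted form $a_h'=\epsilon_h'a_h$, which is what drives the $\ell_0$ vs $\ell_0+1$ split and explains why the hypothesis in (3) concerns $a_{\ell_0+1}$ rather than $a_{\ell_0}$.
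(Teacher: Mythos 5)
Your proposal is correct in substance and, for parts (2) and (3), follows essentially the same route as the paper: reduction to Theorem \ref{th:deng} via a common $\epsilon$ when $\mathcal{Z}=\emptyset$ (using $a_j=b_j=0$ at the disagreement indices), then at most $r$ admissible ratios $c$ from $a_r\equiv b_r c^r \pmod p$ together with at most one nonzero $j_0$ per ratio from the coordinate-$(\ell_0+1)$ relation $j_0^{\ell_0}\bigl[\pm a_{\ell_0}+(a_{\ell_0+1}-b_{\ell_0+1}c^{\ell_0+1})j_0\bigr]\equiv 0 \pmod p$, and in (3) the substitution $j_0\equiv b_1k_0a_1^{-1}$ to kill the $j_0^{\ell_0+1}$ term under the hypothesis $a_{\ell_0+1}b_1^{\ell_0+1}\equiv a_1^{\ell_0+1}b_{\ell_0+1}\pmod p$. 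Three remarks. First, in the converse of (1) you replace the paper's device --- evaluating the coordinate congruences at the two points $j=1$ and $j=2$ and using $k_2\equiv 2k_1\pmod p$ --- by full coefficient matching of a polynomial identity on $\mathbb{Z}_p$; that step is valid only when $d<p$, since $j^p\equiv j\pmod p$ means a nonzero polynomial of degree $\geq p$ can vanish at every point of $\mathbb{Z}_p$. In context $d<p$ is harmless (the Weil bound is vacuous otherwise), but you should either state it or fall back on the two-point evaluation, which needs only $p$ odd; on the other hand, where it applies, your coefficient matching settles all disagreement indices at once, whereas the paper explicitly carries out only the first step of the induction (at $j_0=\min\{i:\epsilon_i'\neq\epsilon_i''\}$). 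Second, your parenthetical in (2) that the first nontrivial reduced coordinate has $A\neq 0$ ``since the two sets are distinct'' misattributes the reason: $A=\pm a_{\ell_0}$, and $a_{\ell_0}\neq 0$ follows from the coordinate-$\ell_0$ relation $a_{\ell_0}j_0^{\ell_0}\equiv b_{\ell_0}k_0^{\ell_0}\pmod p$ combined with $a_{\ell_0}^2+b_{\ell_0}^2\neq 0$ (i.e.\ $\ell_0\in\mathcal{Z}$), not from distinctness of the sets; similarly, in (3) the indices $h<\ell_0$ need not satisfy $\epsilon_h'=\epsilon_h''$ --- they may disagree while $a_h=b_h=0$ --- so your cancellation survives but for a reason your write-up omits. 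Third, your ending of (3) is actually more careful than the paper's: by ruling out $a_{\ell_0}=0$ via $a_{\ell_0}\equiv b_{\ell_0}c^{\ell_0}\pmod p$ and $\ell_0\in\mathcal{Z}$, and by treating $b_1=0$ separately, you close a case the paper's final sentence (``impossible by the assumption'') glosses over, namely $a_{\ell_0}=0$ with $b_{\ell_0}\neq 0$.
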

 \begin{proof}
 (1)
 We assume that (\ref{eq:cond1}) holds.
 Take
 \begin{equation}
\epsilon_j=
\left\{
\begin{array}{c}
\epsilon_j',\quad \text{ if }\epsilon_j'=\epsilon_j''\\
0, \quad \text{ if }\epsilon_j'\neq \epsilon_j''
\end{array}\right. .
\nonumber
\end{equation}
 Noting that $a_j=b_j=0$ provided $\epsilon_j'\neq \epsilon_j''$, we have
 $\mathcal{P}^{\va, \epsilon}_{d, p}=\mathcal{P}^{\va, \epsilon'}_{d, p}$ and $\mathcal{P}^{\vb, \epsilon}_{d, p}=\mathcal{P}^{\vb, \epsilon''}_{d, p}$. Theorem \ref{th:deng} implies that
 $\mathcal{P}^{\va, \epsilon}_{d, p}=\mathcal{P}^{\vb, \epsilon}_{d, p}$ and hence
 $\mathcal{P}^{\va, \epsilon'}_{d, p}=\mathcal{P}^{\vb, \epsilon''}_{d, p}$.

 We next assume that $\mathcal{P}^{\va, \epsilon'}_{d, p}=\mathcal{P}^{\vb, \epsilon''}_{d, p}$
which is equivalent to that there exists
a permutation $\left\{k_{0}, k_{1}, \ldots, k_{p-1}\right\}$ of $\left\{0, 1, \ldots, p-1\right\}$ so that
\begin{equation}\label{eq:6}
\mathbf{x}_{j}^{\va,\epsilon'}=\mathbf{x}_{k_{j}}^{\vb,\epsilon''},\quad j=0, 1, \ldots, p-1.
\end{equation}
This is equivalent to
\begin{equation}\label{eq:theorem31}
\begin{aligned}
    a_{1}&\equiv b_{1}k_{1}\quad (\mathrm{mod}\quad p)\\
    2a_{1}&\equiv b_{1}k_{2}\quad (\mathrm{mod}\quad p)\\
    &\vdots\\
  (p-1)a_{1}&\equiv b_{1}k_{p-1}\quad (\mathrm{mod}\quad p)
  \end{aligned}
\end{equation}
and
\begin{equation}\label{eq:theorem32}
\begin{aligned}
\sum_{h=1}^{i-1} a_{h}'+a_{i} &\equiv \sum_{h=1}^{i-1}b_{h}'k_{1}^{h}+b_{i}k_{1}^{i}\quad (\mathrm{mod} \quad p)\\
\sum_{h=1}^{i-1}2^{h}a_{h}'+a_{i}2^{i}&\equiv \sum_{h=1}^{i-1}b_{h}'k_{2}^{h}+b_{i}k_{2}^{i}\quad (\mathrm{mod}\quad p)\\
\vdots\\ \sum_{h=1}^{i-1}(p-1)^{h}a_{h}'+a_{i}(p-1)^{i}&\equiv \sum_{h=1}^{i-1}b_{h}'k_{p-1}^{h}+b_{i}k_{p-1}^{i}\quad (\mathrm{mod}\quad p),
\end{aligned}
\end{equation}
for $i=2, \ldots, d$. Since $a_{1}\neq 0$, by (\ref{eq:theorem31}) we have
\begin{equation}\label{eq:theorem33}
\begin{aligned}
a_{1}&\equiv b_{1}k_{1}\quad (\mathrm{mod}\,\, p)\\
k_{2}&\equiv 2k_{1}\quad (\mathrm{mod}\,\, p)\\
&\vdots\\
k_{p-1}&\equiv (p-1)k_{1}\quad (\mathrm{mod}\,\, p).
\end{aligned}
\end{equation}
Set $j_0:=\min\left\{i: \epsilon_{i}'\neq \epsilon_{i}''\right\}$. Using the same argument with the one in Theorem \ref{th:deng} we have $a_{i}\equiv b_{i}k_{1}^{i}\quad(\mathrm{mod} \quad p)$, $i=1, \ldots, j_0$. Combining (\ref{eq:theorem32}) for $i=j_0+1$ and (\ref{eq:theorem33}) we have
\begin{eqnarray}
a_{j_0}'+a_{j_0+1}\equiv b_{j_0}'k_{1}^{j_0}+b_{j_0+1}k_{1}^{j_0+1}\quad(\mathrm{mod} \quad p)\nonumber\\
2^{j_0}a_{j_0}'+2^{j_0+1}a_{j_0+1}\equiv b_{j_0}'k_{2}^{j_0}+b_{j_0+1}k_{2}^{j_0+1}\quad(\mathrm{mod} \quad p).\nonumber
\end{eqnarray}
Without loss of generality, we can assume $\epsilon_{j_0}'=1$ and $\epsilon_{j_0}''=0$ and then
\begin{eqnarray}
a_{j_0}+a_{j_0+1}\equiv b_{j_0+1}k_{1}^{j_0+1}\quad(\mathrm{mod} \quad p)\nonumber\\
2^{j_0}a_{j_0}+2^{j_0+1}a_{j_0+1}\equiv b_{j_0+1}k_{2}^{j_0+1}\quad(\mathrm{mod} \quad p), \nonumber
\end{eqnarray}
which implies  $a_{j_0}=b_{j_0}=0$ since $k_{2}\equiv 2k_{1}\quad(\mathrm{mod} \quad p)$ and
$a_{j_{0}}\equiv b_{j_{0}}k_{1}^{j_{0}}\quad(\mathrm{mod}\quad p)$.

(2) We first assume that ${\mathcal Z}=\emptyset$ which implies  $a_j=b_j=0$ provided $\epsilon_{j}'\neq\epsilon_{j}''$.
Take
 \begin{equation}
\epsilon_j=
\left\{
\begin{array}{c}
\epsilon_j',\quad \text{ if }\epsilon_j'=\epsilon_j''\\
0, \quad \text{ if }\epsilon_j'\neq \epsilon_j''
\end{array}\right. .
\nonumber
\end{equation}
Noting that $a_j=b_j=0$ provided $\epsilon_j'\neq \epsilon_j''$, we have
 $\mathcal{P}^{\va, \epsilon}_{d, p}=\mathcal{P}^{\va, \epsilon'}_{d, p}$ and $\mathcal{P}^{\vb, \epsilon}_{d, p}=\mathcal{P}^{\vb, \epsilon''}_{d, p}$.
 The (2) of Theorem 2.4 implies that $\mathcal{P}^{\va, \epsilon'}_{d, p}\cap\mathcal{P}^{\vb, \epsilon''}_{d, p}=\left\{(0, \ldots, 0)\right\}$. We next consider the case where ${\mathcal Z}\neq \emptyset$.
Suppose that $\mathcal{P}^{\va, \epsilon'}_{d, p}$ and $\mathcal{P}^{\vb, \epsilon''}_{d, p}$ have a common nonzero point.
Then, there exist $j, k\in\mathbb{Z}_{p}^{*}$ so that
\begin{equation}\label{eq:fang}
\begin{aligned}
a_{1}j&\equiv b_{1}k\quad(\mathrm{mod} \quad p)\\
a_{1}'j+a_{2}j^{2}&\equiv b_{1}'k+b_{2}k^{2}\quad(\mathrm{mod} \quad p)\\
&\vdots\\
\sum_{h=1}^{d-1}a_{h}'j^{h}+a_{d}j^{d}&\equiv \sum_{h=1}^{d-1}b_{h}'k^{h}+b_{d}k^{d}\quad(\mathrm{mod} \quad p).
\end{aligned}
\end{equation}
Note that $a_h=b_h=0$ when $h\leq r-1$ and  $a_h'=b_h'=0, h\leq r-1$ .
The (\ref{eq:fang}) implies that
\begin{equation}\label{eq:theorem36}
a_{h}j^{h}\equiv b_{h}k^{h}\quad(\mathrm{mod} \quad p), \quad h=r, \ldots, \ell_0,
\end{equation}
\begin{equation}\label{eq:theorem37}
a_{\ell}'j^{\ell_0}+a_{\ell_0+1}j^{\ell_0+1}\equiv b_{\ell_0}'k^{\ell_0}+b_{\ell_0+1}k^{\ell_0+1}\quad(\mathrm{mod} \quad p).
\end{equation}
Without loss of generality, we can assume $\epsilon_{\ell_0}'=1$ and $\epsilon_{\ell_0}''=0$. By (\ref{eq:theorem37}), we have
\begin{equation}
a_{\ell_0}j^{\ell_0}+a_{\ell_0+1}j^{\ell_0+1}\equiv b_{\ell_0+1}k^{\ell_0+1}\quad(\mathrm{mod} \quad p).\nonumber
\end{equation}
Taking $h=r$ in  (\ref{eq:theorem36}),
we have $a_{r}\equiv b_{r}(kj^{-1})^{r}\quad(\mathrm{mod} \quad p)$ where $j^{-1}\in \Z_p$ satisfies  $j^{-1} j\equiv 1\quad (\mathrm{mod}\quad p)$.
Since  $a_{r}^{2}+b_{r}^{2}\neq 0$, we have $a_{r}\neq 0$. Set $x_0=kj^{-1}$. Then $x_0$ satisfies
 \begin{equation*}
 \begin{aligned}
 a_{r}& \equiv b_{r}x_0^{r}\quad(\mathrm{mod} \quad p)\\
 a_{\ell_0}j^{-1}+a_{\ell_0+1} & \equiv b_{\ell_0+1}x_0^{\ell_0+1}\quad(\mathrm{mod} \quad p).
 \end{aligned}
 \end{equation*}
 Each nonzero point in $\mathcal{P}^{\va, \epsilon'}_{d, p}\cap\mathcal{P}^{\vb, \epsilon''}_{d, p}$
 corresponds to a solution to
  \begin{equation}\label{eq:fangcheng}
 \begin{aligned}
 a_{r}& \equiv b_{r}x^{r}\quad(\mathrm{mod} \quad p)\\
 a_{\ell_0}j^{-1}+a_{\ell_0+1} & \equiv b_{\ell_0+1}x^{\ell_0+1}\quad(\mathrm{mod} \quad p).
 \end{aligned}
 \end{equation}

Note that $a_{r}\equiv b_{r}x^{r}\quad(\mathrm{mod} \quad p)$ has at most $r$ solutions. Hence,
 \[
 \abs{\mathcal{P}^{\va, \epsilon'}_{d, p}\cap\mathcal{P}^{\vb, \epsilon''}_{d, p}}\,\,\leq\,\, r+1.
 \]

(3)
We prove it by contradiction. Assume that $\mathcal{P}_{d, p}^{\va,\epsilon'}\cap\mathcal{P}_{d, q}^{\va,\epsilon''}\neq\left\{(0, \ldots, 0)\right\}$, and then there exist
$j_{0}, k_{0}\in\mathbb{Z}_{p}^{*}$ so that $\mathbf{x}_{j_{0}}^{\va,\epsilon'}=\mathbf{x}_{k_{0}}^{\vb,\epsilon''}$. Particularly, we have
\begin{equation}\label{eq:15}
a_{h}j_{0}^{h}\equiv b_{h}k_{0}^{h}\quad(\mathrm{mod} \quad p), h=1, \ldots, \ell_{0},
\end{equation}
\begin{equation}\label{eq:16}
a_{\ell_{0}}'j_{0}^{\ell_{0}}+a_{\ell_{0}+1}j_{0}^{\ell_{0}+1}\equiv b_{\ell_{0}}'k_{0}^{\ell_{0}}+b_{\ell_{0}+1}k_{0}^{\ell_{0}+1} \quad(\mathrm{mod} \quad p).
\end{equation}
Without loss of generality, we can assume $\epsilon_{\ell_0}'=1$ and $\epsilon_{\ell_0}''=0$. By (\ref{eq:16}), we have
\begin{equation}\label{eq:17}
a_{\ell_{0}}j_{0}^{\ell_{0}}+a_{\ell_{0}+1}j_{0}^{\ell_{0}+1}\equiv b_{\ell_{0}+1}k_{0}^{\ell_{0}+1} \quad(\mathrm{mod} \quad p).
\end{equation}
By (\ref{eq:15}) with $h=1$, we have
\begin{eqnarray}
a_{\ell_{0}+1}j_{0}^{\ell_{0}+1}-b_{\ell_{0}+1}k_{0}^{\ell_{0}+1} &\equiv&
a_{\ell_{0}+1}(b_{1}k_{0}a_{1}^{-1})^{\ell_{0}+1}-b_{\ell_{0}+1}k_{0}^{\ell_{0}+1}\nonumber\\
&\equiv& k_{0}^{\ell_{0}+1}(a_{\ell_{0}+1}b_{1}^{\ell_{0}+1}a_{1}^{-\ell_{0}-1}-b_{\ell_{0}+1})\nonumber\\
&\equiv& 0 \quad(\mathrm{mod} \quad p),\nonumber
\end{eqnarray}
according to $a_{\ell_0+1}b_{1}^{\ell_0+1}\equiv a_{1}^{\ell_0+1}b_{\ell_0+1}\quad(\mathrm{mod} \quad p)$. By (\ref{eq:17}), we have
$a_{\ell_{0}}j_{0}^{\ell_{0}}\equiv 0\quad(\mathrm{mod} \quad p)$, which implies that $a_{\ell_{0}}\equiv 0\quad(\mathrm{mod} \quad p)$
or $j_{0}\equiv 0\quad(\mathrm{mod} \quad p)$. This is impossible by the assumption.

\end{proof}

In the following, we choose the appropriate vectors $\va, \vb$ so that  $|\mathcal{L}_{p, q}|=q+p-1$.
We now state the inequalities for exponential sums over ${\mathcal L}_{p,q}$, which is the main result of this subsection.

\begin{theorem}\label{th:weil2}
Suppose $p$ and $q$ are odd prime numbers and set $m=p+q$. Recall that
\begin{equation}
\mathcal{L}_{p, q}=\left\{
\begin{array}{c}
\mathcal{P}_{d,p}\cup\mathcal{P}_{d,q}, \quad p\neq q\\
\mathcal{P}^{\va, \epsilon'}_{d, p}\cup
\mathcal{P}^{\vb, \epsilon''}_{d, p}, \quad p=q.
\end{array}\right.\nonumber
\end{equation}
We assume that $\abs{\mathcal{L}_{p,q}}=p+q-1$.
Then, for any $\mathbf{k}\in[-p+1, p-1]^{d}\cap[-q+1, q-1]^{d}\cap\mathbb{Z}^{d}$ and $\mathbf{k}\neq 0$, we have
\begin{equation}
\Big|\sum_{\mathbf{x}\in\mathcal{L}_{p, q}}\exp(2\pi\mathbf{i}\mathbf{k}\cdot\mathbf{x})\Big|\leq(d-1)\sqrt{2m}+1.
\nonumber
\end{equation}
\end{theorem}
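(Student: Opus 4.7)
The plan is to exploit the near-disjointness of the two constituent point sets. By the definitions, $\mathcal{L}_{p,q}$ is a union of two sets each of which falls under the scope of Theorem~\ref{th:weil1}, and the hypothesis $\abs{\mathcal{L}_{p,q}}=p+q-1$ guarantees that the two sets intersect exactly in the origin (combining the prior theorems on intersections with the assumed cardinality).

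First I would write
\begin{equation*}
\sum_{\mathbf{x}\in\mathcal{L}_{p,q}}\exp(2\pi\mathbf{i}\,\mathbf{k}\cdot\mathbf{x})
 = \sum_{\mathbf{x}\in S_1}\exp(2\pi\mathbf{i}\,\mathbf{k}\cdot\mathbf{x})
 + \sum_{\mathbf{x}\in S_2}\exp(2\pi\mathbf{i}\,\mathbf{k}\cdot\mathbf{x}) - 1,
\end{equation*}
where $S_1,S_2$ denote the two $p$-sets appearing in the definition of $\mathcal{L}_{p,q}$ (either $\mathcal{P}_{d,p},\mathcal{P}_{d,q}$ or $\mathcal{P}^{\va,\epsilon'}_{d,p},\mathcal{P}^{\vb,\epsilon''}_{d,p}$). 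The $-1$ comes from the fact that the origin lies in both sets and contributes $\exp(0)=1$ to each partial sum, so an inclusion--exclusion correction is needed; this is where the cardinality assumption enters.

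Next, since $\mathbf{k}\in[-p+1,p-1]^d\cap[-q+1,q-1]^d\cap\Z^d\setminus\{0\}$, the hypotheses of Theorem~\ref{th:weil1} are satisfied for both $S_1$ (with modulus $p$) and $S_2$ (with modulus $q$), so I would apply that theorem twice to obtain
\begin{equation*}
\Big|\sum_{\mathbf{x}\in\mathcal{L}_{p,q}}\exp(2\pi\mathbf{i}\,\mathbf{k}\cdot\mathbf{x})\Big|
\leq (d-1)\sqrt{p}+(d-1)\sqrt{q}+1.
\end{equation*}
Finally, the elementary inequality $\sqrt{p}+\sqrt{q}\leq\sqrt{2(p+q)}=\sqrt{2m}$ (Cauchy--Schwarz applied to $(1,1)$ and $(\sqrt{p},\sqrt{q})$) yields the claimed bound. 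In the $p=q$ case this step becomes an equality, $2\sqrt{p}=\sqrt{4p}=\sqrt{2m}$, showing the bound is tight there.

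I do not anticipate any real obstacle: the only subtlety is correctly accounting for the common origin via the $-1$, and verifying that the interval condition on $\mathbf{k}$ is exactly what is needed to invoke Theorem~\ref{th:weil1} on \emph{each} of the two sets separately (the intersection of intervals in the hypothesis is precisely what guarantees $p\nmid c_{j_0}$ for $S_1$ and $q\nmid c_{j_0}$ for $S_2$ simultaneously).
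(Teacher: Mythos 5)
Your proposal is correct and follows essentially the same route as the paper's proof: decompose the sum over $\mathcal{L}_{p,q}$ into the two constituent $p$-sets with a $\pm 1$ correction for the common origin, apply Theorem~\ref{th:weil1} to each piece, and finish with $\sqrt{p}+\sqrt{q}\leq\sqrt{2m}$. Your treatment is in fact slightly more careful than the paper's, since you note explicitly that the origin belongs to both sets (justifying the $-1$) and that the interval condition on $\mathbf{k}$ is what makes Weil's bound applicable modulo each prime separately.
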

\begin{proof}
We first consider the case where $p=q$. We have
\[
{\mathcal L}_{p,q}=\mathcal{P}^{\va, \epsilon'}_{d, p}\cup
\mathcal{P}^{\vb, \epsilon''}_{d, p}.
\]
Recall that
\[
\mathcal{P}^{\va, \epsilon'}_{d, p}\cap\mathcal{P}^{\vb, \epsilon''}_{d, p}=\left\{(0, \ldots, 0)\right\}.
\]
Then
\begin{eqnarray}
\Big|\sum_{\mathbf{x}\in\mathcal{L}_{p, q}}\exp(2\pi\mathbf{i}\mathbf{k}\cdot\mathbf{x})\Big|
&\leq & \Big|\sum_{\mathbf{x}\in\mathcal{P}_{d,p}^{\va, \epsilon'}}\exp(2\pi\mathbf{i}\mathbf{k}\cdot\mathbf{x})\Big|
+\Big|\sum_{\mathbf{x}\in\mathcal{P}_{d,p}^{\vb, \epsilon''}}\exp(2\pi\mathbf{i}\mathbf{k}\cdot\mathbf{x})\Big|+1\nonumber\\
&\leq &(d-1)\sqrt{p}+(d-1)\sqrt{p}+1\nonumber\\
&=&(d-1)\sqrt{2m}+1\nonumber.
\end{eqnarray}
Here, in the last inequality, we use  Theorem \ref{th:weil1}.
We next consider the case where $p\neq q$.
When $p\neq q$, $\mathcal{L}_{p, q} = \mathcal{P}_{d,p}\cup\mathcal{P}_{d,q}$. Then we have
\begin{eqnarray}
\Big|\sum_{\mathbf{x}\in\mathcal{L}_{p, q}}\exp(2\pi\mathbf{i}\mathbf{k}\cdot\mathbf{x})\Big|&\leq & \Big|\sum_{\mathbf{x}\in\mathcal{P}_{d,p}}\exp(2\pi\mathbf{i}\mathbf{k}\cdot\mathbf{x})\Big|
+\Big|\sum_{\mathbf{x}\in\mathcal{P}_{d,q}}\exp(2\pi\mathbf{i}\mathbf{k}\cdot\mathbf{x})\Big|+1\nonumber\\
&\leq & (d-1)\sqrt{p}+(d-1)\sqrt{q}+1\nonumber\\
&\leq &(d-1)\sqrt{2m}+1\nonumber.
\end{eqnarray}
\end{proof}
\subsection{The exponential sums over ${\mathcal Q}_{p^2,d}^{\va,\epsilon}$ and ${\mathcal R}_{p^2,d}^{\va,\epsilon}$}Suppose that $\va\in {\mathbb Z}_p^d$ and $\epsilon\in \left\{0,1\right\}^{d-1}$. We set
\begin{eqnarray*}
{\mathcal Q}_{p^2,d}^{\va,\epsilon}:=\left\{\mathbf{z}_{j}^{\va,\epsilon} : j=0,\ldots,p^2-1\right\},
\end{eqnarray*}
\begin{eqnarray*}
\mathbf{z}_{j}^{\va,\epsilon}=\Big(\left\{\frac{a_1j}{p^2}\right\}, \left\{\frac{a_1'j+a_2j^{2}}{p^2}\right\}, \ldots, \left\{\frac{\sum_{h=1}^{d-1}a_h'j^h+a_dj^{d}}{p^2}\right\}\Big)\in [0,1)^d;
\end{eqnarray*}
and
\begin{eqnarray*}
{\mathcal R}_{p^2,d}^{\va,\epsilon}:=\left\{\vz_{j,k}^{\va,\epsilon}: j,k=0,\ldots,p-1\right\},
\end{eqnarray*}
\begin{eqnarray*}
\vz_{j,k}^{\va,\epsilon}=\Big(\left\{\frac{a_1k}{p} \right\},\left\{\frac{(a_1'+a_2j)k}{p} \right\},\ldots,\left\{\frac{(\sum_{h=1}^{d-1}a_h'j^{h-1}+a_{d}j^{d-1})k}{p} \right\}\Big)\in [0,1)^d.
\end{eqnarray*}
The ${\mathcal Q}_{p^2,d}^{\va,\epsilon}$ and ${\mathcal R}_{p^2,d}^{\va,\epsilon}$ can be considered as
the generalization of the $p$-sets given in $(\ref{eq:p2set})$.
Based on the Lemma 5 and Lemma 6 in \cite{Dick2}, we can obtain the following inequalities for exponential sums
over ${\mathcal Q}_{p^2,d}^{\va,\epsilon}$ and ${\mathcal R}_{p^2,d}^{\va,\epsilon}$.
\begin{theorem}
Suppose that $\va\in {\mathbb Z}_p^d$ and $\epsilon\in \left\{0,1\right\}^{d-1}$. Then, for any $\mathbf{k}=(k_1,\ldots,k_d)\in[-p+1, p-1]^{d}\cap\mathbb{Z}^{d}$ and $\mathbf{k}\neq 0$, we have
\begin{equation}
\Big|\sum_{\mathbf{x}\in{\mathcal Q}_{p^2,d}^{\va,\epsilon}}\exp(2\pi\mathbf{i}\mathbf{k}\cdot\mathbf{x})\Big|\leq(d-1)p.
\nonumber
\end{equation}
\end{theorem}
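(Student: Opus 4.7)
The plan is to reduce the exponential sum over the modulus $p^2$ to a sum of polynomial character sums modulo $p$, to which Lagrange's root bound can be applied. Exactly as in the proof of Theorem~\ref{th:weil1}, the inner product $\mathbf{k}\cdot \mathbf{z}_j^{\va,\epsilon}$ collects into a single polynomial
\[
g(j)\;=\;\sum_{\ell=1}^{d} c_{\ell}\, j^{\ell},\qquad c_{\ell}=k_{\ell}a_{\ell}+\epsilon_{\ell}a_{\ell}\sum_{h=\ell+1}^{d}k_{h},\quad c_d=k_d a_d,
\]
so that the target quantity equals $\bigl|\sum_{j=0}^{p^{2}-1}\exp(2\pi\mathbf{i}\,g(j)/p^{2})\bigr|$. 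Note that here Weil's theorem cannot be invoked directly because the modulus is $p^{2}$, not a prime; this is the reason a different device is needed.

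First I would perform the $p$-adic decomposition $j=a+bp$ with $a,b\in\{0,\dots,p-1\}$, and expand each power $j^{\ell}$ to first order using the binomial theorem: since $(bp)^{2}\equiv 0\pmod{p^{2}}$,
\[
j^{\ell}\;\equiv\; a^{\ell}+\ell\, a^{\ell-1}\,b\,p\pmod{p^{2}}.
\]
Summing, $g(j)\equiv g(a)+p\,b\,g'(a)\pmod{p^{2}}$ where $g'(x)=\sum_{\ell=1}^{d}\ell c_{\ell}x^{\ell-1}$ is the formal derivative of $g$. Consequently the double sum factors as
\[
\sum_{j=0}^{p^{2}-1}\exp\!\Big(\tfrac{2\pi\mathbf{i}\,g(j)}{p^{2}}\Big)
\;=\;\sum_{a=0}^{p-1}\exp\!\Big(\tfrac{2\pi\mathbf{i}\,g(a)}{p^{2}}\Big)\;\sum_{b=0}^{p-1}\exp\!\Big(\tfrac{2\pi\mathbf{i}\, b\,g'(a)}{p}\Big).
\]
The inner geometric sum collapses to $p$ when $g'(a)\equiv 0\pmod p$ and to $0$ otherwise, so after taking absolute values I obtain the bound $p\cdot N$, where $N$ is the number of roots of $g'$ modulo $p$.

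Second I would control $N$ via Lagrange's theorem in $\mathbb{F}_{p}[x]$. Setting $j_{0}:=\max\{\ell:k_{\ell}\neq 0\}$ exactly as in Theorem~\ref{th:weil1} gives $c_{j_{0}}=k_{j_{0}}a_{j_{0}}$; assuming $a_{j_{0}}\not\equiv 0\pmod p$, the polynomial $g$ has degree exactly $j_{0}$ modulo $p$, so $g'$ is a polynomial of degree $j_{0}-1\le d-1$ whose leading coefficient $j_{0}c_{j_{0}}$ is nonzero modulo $p$ (recall $1\le j_{0}\le d<p$). Hence $N\le d-1$, and combining with the previous step yields $|S|\le (d-1)p$, which is the desired inequality.

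The main obstacle is bookkeeping at the top coefficient: one must verify that the degree of $g$ modulo $p$ is not accidentally lower than expected, and this is the only place where an implicit hypothesis on $\va$ (namely that the relevant entry $a_{j_{0}}$ be a unit mod $p$) is needed; this is precisely the content borrowed from Lemma~5 and Lemma~6 of \cite{Dick2}. Everything else is routine: the $p$-adic expansion is uniform in $\va$ and $\epsilon$, and Lagrange's theorem is applied to a polynomial whose degree drop was controlled in the first step.
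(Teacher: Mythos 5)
Your proposal is correct, and its first step coincides with the paper's: you assemble $\mathbf{k}\cdot\mathbf{z}_{j}^{\va,\epsilon}$ into the polynomial $g(j)=\sum_{\ell=1}^{d}c_{\ell}j^{\ell}$ with $c_{j_0}=k_{j_0}a_{j_0}$ for $j_0=\max\{\ell:k_{\ell}\neq 0\}$, exactly as in the paper's proof. The difference is what comes next: the paper at that point simply cites Lemma 5 of \cite{Dick2} as a black box for the bound $\bigl|\sum_{j=0}^{p^2-1}e^{2\pi\mathbf{i}g(j)/p^{2}}\bigr|\leq (d-1)p$, whereas you prove that estimate from scratch via the decomposition $j=a+bp$, the congruence $g(j)\equiv g(a)+pb\,g'(a)\pmod{p^{2}}$, orthogonality of the geometric sum in $b$, and Lagrange's root bound for $g'$ modulo $p$. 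This is in substance the standard proof of the cited lemma (the technique goes back to Hua--Wang), so mathematically the two routes agree; what your self-contained version buys is that it surfaces two hypotheses the paper leaves implicit. First, the theorem as stated only assumes $\va\in\mathbb{Z}_{p}^{d}$, so $a_{j_0}$ may be $0$ and the paper's claim ``$p\nmid c_{j_0}$'' can fail --- indeed $\va=(0,\ldots,0)$ makes the sum equal to $p^{2}$ --- so the hypothesis should be $\va\in[1,p-1]^{d}\cap\mathbb{Z}^{d}$, as in Theorem \ref{th:weil1} and in the companion result for ${\mathcal R}_{p^2,d}^{\va,\epsilon}$; this is precisely the unit condition you flag. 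Second, your parenthetical ``recall $1\leq j_{0}\leq d<p$'' quietly adds the assumption $d<p$, which the paper never states either: it is needed so that $p\nmid j_{0}$, for otherwise the leading coefficient $j_{0}c_{j_{0}}$ of $g'$ vanishes modulo $p$ and $g'$ may even be identically zero mod $p$ (e.g.\ $g(x)=c_{p}x^{p}$ when $d=p$), in which case the count $N\leq d-1$ breaks down. Since for $d\geq p+1$ the claimed bound is trivially true ($|S|\leq p^{2}\leq (d-1)p$), only $d=p$ is genuinely delicate; you should therefore state $d<p$ (or directly $p\nmid j_{0}$ and $a_{j_{0}}\not\equiv 0 \pmod p$) as an explicit hypothesis rather than ``recalling'' it, but with that emendation your argument is complete and, unlike the paper's, fully self-contained.
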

\begin{proof}
Set
\begin{equation*}
g(j)\,\,:=\,\,\sum_{\ell=1}^{d}c_{\ell}j^{\ell},
\end{equation*}
where $c_{\ell}=k_{\ell}a_{\ell}+k_{\ell+1}a_{\ell}'+\cdots+k_{d}a_{\ell}'$.
We set  $j_0:=\max\left\{\ell:k_{\ell}\neq 0\right\}$. Then $c_{j_0}=k_{j_0}a_{j_0}$ and we have $p\nmid c_{j_0}$.
According to Lemma 5 in \cite{Dick2}, we have
\begin{equation*}
\Big|\sum_{\mathbf{x}\in{\mathcal Q}_{p^2,d}^{\va,\epsilon}}\exp(2\pi\mathbf{i}\mathbf{k}\cdot\mathbf{x})\Big|
 =\Big|\sum_{j=0}^{p^{2}-1}\exp\Big(2\pi\mathbf{i}\frac{g(j)}{p^{2}}\Big)\Big|\leq (d-1)p.
\end{equation*}
\end{proof}
\begin{theorem}
Suppose that $\va\in[1, p-1]^{d}\cap\mathbb{Z}^{d}$. Then, for any $\mathbf{k}\in[-p+1, p-1]^{d}\cap\mathbb{Z}^{d}$ and $\mathbf{k}\neq 0$, we have
\begin{equation}
\Big|\sum_{\mathbf{x}\in{{\mathcal R}_{p^2,d}^{\va,\epsilon}}}\exp(2\pi\mathbf{i}\mathbf{k}\cdot\mathbf{x})\Big|\leq(d-1)p.
\nonumber
\end{equation}
\end{theorem}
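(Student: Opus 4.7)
The plan is to mimic the argument used in the proof of the analogous theorem for $\mathcal{Q}_{p^2,d}^{\va,\epsilon}$, but now the summation splits cleanly over the two indices $j$ and $k$, so I can avoid Weil/Lemma 6 and reduce directly to Lagrange's theorem on the number of roots of a polynomial over $\mathbb{F}_p$. Explicitly, I would first compute the inner product: for fixed $j,k$,
\begin{equation*}
\mathbf{k}\cdot \vz_{j,k}^{\va,\epsilon} \;=\; \frac{k}{p}\,F(j), \qquad F(j) \;:=\; k_1 a_1 + \sum_{i=2}^{d} k_i\Big(\sum_{h=1}^{i-1} a_h' j^{h-1} + a_i j^{i-1}\Big).
\end{equation*}
Thus $F(j)\in\mathbb{Z}[j]$ is a polynomial in $j$ of degree at most $d-1$ whose coefficients are linear combinations of the $k_i a_j$ and $k_i a_j'$.

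Next I would identify its leading term. Let $j_0:=\max\{\ell: k_\ell\ne 0\}$, so $k_i=0$ for $i>j_0$. Reading off the coefficient of $j^{j_0-1}$ in $F(j)$, only the term $k_{j_0} a_{j_0} j^{j_0-1}$ survives, since every $a_{j_0}'$ contribution is multiplied by a $k_i$ with $i>j_0$. Because $a_{j_0}\in[1,p-1]$ and $k_{j_0}\neq 0$ with $|k_{j_0}|\le p-1$, we get $p\nmid k_{j_0}a_{j_0}$. Hence $F(j)$ reduces mod $p$ to a nonzero polynomial of degree $j_0-1\le d-1$.

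Now I would split the double sum and use orthogonality of additive characters on $\mathbb{Z}_p$:
\begin{equation*}
\sum_{\mathbf{x}\in{\mathcal R}_{p^2,d}^{\va,\epsilon}} \exp(2\pi\mathbf{i}\mathbf{k}\cdot \mathbf{x}) \;=\; \sum_{j=0}^{p-1}\sum_{k=0}^{p-1}\exp\!\Big(2\pi\mathbf{i}\frac{k F(j)}{p}\Big) \;=\; p\cdot\#\{j\in\mathbb{Z}_p : F(j)\equiv 0\pmod p\}.
\end{equation*}
By Lagrange's theorem in $\mathbb{F}_p[j]$, the polynomial $F(j) \bmod p$ has at most $j_0-1\le d-1$ roots (the case $j_0=1$ is trivial since then $F(j)\equiv k_1 a_1\not\equiv 0\pmod p$, giving zero roots). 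The bound $(d-1)p$ follows.

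The main (mild) obstacle is bookkeeping the coefficients of $F(j)$ to confirm that the leading coefficient is exactly $k_{j_0}a_{j_0}$ rather than something that could vanish mod $p$; this requires keeping track of where each $a_h'$ contribution lands. Once that is verified, the rest is an immediate character-sum collapse plus Lagrange, with no need to invoke Weil. Alternatively, one could phrase the final step as a direct application of Lemma 6 of \cite{Dick2} to $F$, which is the route parallel to the proof of the preceding theorem for $\mathcal{Q}_{p^2,d}^{\va,\epsilon}$.
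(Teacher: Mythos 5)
Your proposal is correct, and its first half is exactly the paper's reduction: the paper sets $g(j)=\sum_{\ell=0}^{d-1}c_{\ell}j^{\ell}$ with $c_{\ell}=k_{\ell+1}a_{\ell+1}+(k_{\ell+2}+\cdots+k_{d})a_{\ell+1}'$, which is precisely your $F$, and likewise observes that the coefficient of $j^{j_0-1}$ collapses to $c_{j_0-1}=k_{j_0}a_{j_0}$, not divisible by $p$ since $a_{j_0}\in[1,p-1]$ and $0<\abs{k_{j_0}}\leq p-1$ (your bookkeeping of where the $a_h'$ terms land is right: every $a_{j_0}'$ contribution carries a factor $k_i$ with $i>j_0$, hence vanishes). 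Where you genuinely diverge is the final step. The paper invokes Lemma 6 of \cite{Dick2} as a black box to bound $\bigl|\sum_{j=0}^{p-1}\sum_{k=0}^{p-1}\exp\bigl(2\pi\mathbf{i}\,k\,g(j)/p\bigr)\bigr|$, whereas you re-prove that bound on the spot: orthogonality of the additive characters of $\mathbb{Z}_p$ in the $k$-sum collapses the double sum to $p\cdot\#\{j\in\mathbb{Z}_p: F(j)\equiv 0 \ (\mathrm{mod}\ p)\}$, and since $F\bmod p$ is a nonzero polynomial of degree exactly $j_0-1\leq d-1$, Lagrange's theorem caps the root count at $d-1$ (your handling of the degenerate case $j_0=1$, a nonzero constant with no roots, is also correct). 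Your route buys a self-contained, entirely elementary proof with no Weil-type input, and it yields strictly more information: the character sum is a nonnegative integer, equal to $p$ times the number of roots, so the modulus signs are superfluous here. The paper's route buys brevity and structural parallelism with the preceding theorem for $\mathcal{Q}_{p^2,d}^{\va,\epsilon}$, where the analogous Lemma 5 of \cite{Dick2} works modulo $p^2$ and genuinely requires nontrivial exponential-sum estimates that Lagrange alone cannot supply.
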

\begin{proof}
Set
\begin{equation*}
g(j):=\sum_{\ell=0}^{d-1}c_{\ell}j^{\ell}
\end{equation*}
and $c_{\ell}:=k_{\ell+1}a_{\ell+1}+k_{\ell+2}a_{\ell+1}'+\cdots+k_{d}a_{\ell+1}'$.
We set  $j_0:=\max\left\{\ell:k_{\ell}\neq 0\right\}$. Then $c_{j_0-1}=k_{j_0}a_{j_0}$ and we have $p\nmid c_{j_0-1}$.
Using  Lemma 6 in \cite{Dick2}, we have
\begin{equation}
\Big|\sum_{\mathbf{x}\in{{\mathcal R}_{p^2,d}^{\va,\epsilon}}}\exp(2\pi\mathbf{i}\mathbf{k}\cdot\mathbf{x})\Big| =\Big|\sum_{j=0}^{p-1}\sum_{k=0}^{p-1}\exp\Big(2\pi\mathbf{i}k\frac{g(j)}{p}\Big)\Big|\leq (d-1)p.\nonumber
\end{equation}

\end{proof}
\section{The applications of $\mathcal{P}_{d,p}^{\va,\epsilon}$ and
${\mathcal L}_{p,q}$}
Based on the exponential sum  formula in Section 2, the new point sets are useful
in numerical integration \cite{Nie, Dick3}, in UQ  \cite{XZ} and in the recovery of sparse trigonometric polynomials \cite{Xu}. We just state the results for the recovery of sparse trigonometric polynomials in detail.

We start with some notations which go back to \cite{Xu}. Set
\[
\Pi_{s}^{d}\,\,:=\,\,\left\{f: f(\mathbf{x})=\sum_{\mathbf{k}\in[-s,s]^{d}\bigcap\mathbb{Z}^{d}}c_{\mathbf{k}}e^{2\pi \mathbf{i}\mathbf{k}\cdot\mathbf{x}},\quad c_{\mathbf{k}}\in\mathbb{C}, \quad\mathbf{x}\in[0,1]^{d}\right\}.
\]
Note that $\Pi_{s}^{d}$ is a linear space with the dimension $D:=(2s+1)^{d}$. For
\[
f(\mathbf{x})=\sum_{\mathbf{k}\in[-s,s]^{d}\bigcap\mathbb{Z}^{d}}c_{\mathbf{k}}e^{2\pi \mathbf{i}\mathbf{k}\cdot\mathbf{x}}\in \Pi_{s}^{d},
\]
we set $\mathbf{T}:=\left\{\mathbf{k}:c_{\mathbf{k}}\neq 0\right\}$ which is the support of the sequence of coefficients $c_{\mathbf{k}}$, and set
\[
\Pi_{s}^{d}(M):=\bigcup_{\mathbf{T}\subset[-s,s]^{d}\bigcap\mathbb{Z}^{d},|\mathbf{T}|\leq M}\Pi_{\mathbf{T}},
\]
where $\Pi_{\mathbf{T}}$ denotes the space of all trigonometric polynomials whose coefficients are supported on $\mathbf{T}$. When $M\ll D$, we call the trigonometric polynomials in
$\Pi_s^{d}(M)$ as $M$-sparse trigonometric polynomials.

The recovery of sparse trigonometric polynomials is an active topic recently.
The main aim of this research topic is to design a sampling set $X=\left\{\vz_j\right\}_{j=1}^N$ so that
one can recover $f\in \Pi_s^d(M)$ from $f(\vz_j), \vz_j\in X$ \cite{Xu, R07, Rauhut}. We state the problem as follows.
Assume  the sampling set is $X=\left\{\mathbf{x}_{j}\in[0, 1)^{d}, j=1, \ldots, N\right\}$. Then our aim is to solve the following programming:
\begin{equation}\label{eq:re1}
\text{ find } f \in\Pi_{s}^{d}(M) \qquad \text{ subject to } \quad
f(\mathbf{x}_{j})=y_j,\quad j=1, \ldots, N.
\end{equation}
Denote by $\mathbf{F}_{X}$ the $N\times D$ sampling matrix with entries
\[
(\mathbf{F}_{X})_{j,\mathbf{k}}=\exp(2\pi \mathbf{i}\mathbf{k}\cdot \mathbf{x}_{j}),\quad j=1,\ldots, N,\quad \mathbf{k}\in[-s,s]^{d}\cap\mathbb{Z}^{d}.
\]
Let $\mathbf{a}_{\mathbf{k}}=(\exp(2\pi \mathbf{i}\mathbf{k}\cdot \mathbf{x}_{j}))_{j=1}^N$ denote a column of
$\mathbf{F}_{X}$ with $\mathbf{k}\in[-s,s]^{d}\bigcap\mathbb{Z}^{d}$. A simple observation is that $\|\mathbf{a}_{\mathbf{k}}\|_{2}=\sqrt{N}$. Set
\begin{equation}
\mu:=\mu_X:=\frac{1}{N}\max_{\mathbf{m}, \mathbf{k}\in[-s,s]^{d}\cap\mathbb{Z}^{d},
\mathbf{m}\neq\mathbf{k}}|\langle\mathbf{a}_{\mathbf{m}}, \mathbf{a}_{\mathbf{k}}\rangle|,
\nonumber
\end{equation}
which is called the mutual incoherence of the matrix $\mathbf{F}_{X}/\sqrt{N}$. Theorem 2.5 in \cite{Rauhut} shows that if $\mu<1/(2M-1)$ then
the Orthogonal Matching Pursuit Algorithm (OMP) and the Basis Pursuit Algorithm (BP) can recover any $M$-sparse trigonometric polynomials in $\Pi_{s}^{d}(M)$.
Therefore, our aim is to  choose the sampling set $X$ so that $\mu$ is small and hence OMP and BP can recover $M$-sparse trigonometric polynomials. Based on Theorem \ref{th:weil1} and Theorem \ref{th:weil2} respectively, the following results give upper bounds of $\mu$ with taking $X=\mathcal{P}_{d,p}^{\va, \epsilon}$,
and $X={\mathcal L}_{p,q}$, respectively.

\begin{lemma}\label{th:coh}
\begin{enumerate}
\item Suppose that $X=\mathcal{P}_{d,p}^{\va, \epsilon}$ where $\va\in[1, p-1]^{d}\cap\mathbb{Z}^{d}$ and $p\geq 2s+1$ is a prime number. Then
\begin{equation}
\mu_X\,\,\leq\,\, (d-1)/\sqrt{p}.
\nonumber
\end{equation}
\item
Suppose that $p,q\geq 2s+1$ are prime numbers and
$\va, \vb\in[1, p-1]^{d}\cap\mathbb{Z}^{d}$.
Recall that
\[
\mathcal{L}_{p, q}=\left\{
\begin{array}{c}
\mathcal{P}_{d,p}\cup\mathcal{P}_{d,q}, \quad p\neq q\\
\mathcal{P}^{\va, \epsilon'}_{d, p}\cup
\mathcal{P}^{\vb, \epsilon''}_{d, p}, \quad p=q.
\end{array}\right.
\]
Set $X=\mathcal{L}_{p, q}$ and $m=p+q$. Then
\begin{equation}
\mu_X\leq\frac{(d-1)\sqrt{2m}+1}{m-1}.
\nonumber
\end{equation}
\end{enumerate}
\end{lemma}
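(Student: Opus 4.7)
The plan is to reduce both mutual-incoherence bounds to the exponential-sum estimates already proved (Theorem \ref{th:weil1} and Theorem \ref{th:weil2}), since the inner product between any two columns of $\mathbf{F}_X$ is literally an exponential sum of the required shape.

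First, I would unwind the definition of $\mu_X$. For $\mathbf{m}, \mathbf{k}\in[-s,s]^d\cap\mathbb{Z}^d$ with $\mathbf{m}\neq\mathbf{k}$, write
\[
\langle \mathbf{a}_{\mathbf{m}}, \mathbf{a}_{\mathbf{k}}\rangle
= \sum_{\mathbf{x}\in X}\exp\bigl(2\pi\mathbf{i}(\mathbf{k}-\mathbf{m})\cdot\mathbf{x}\bigr),
\]
and set $\mathbf{h}:=\mathbf{k}-\mathbf{m}$, so that $\mathbf{h}\in[-2s,2s]^d\cap\mathbb{Z}^d$ and $\mathbf{h}\neq 0$. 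Thus $\mu_X = N^{-1}\max_{\mathbf{h}}\bigl|\sum_{\mathbf{x}\in X}\exp(2\pi\mathbf{i}\mathbf{h}\cdot\mathbf{x})\bigr|$, where the maximum runs over the nonzero $\mathbf{h}$ in this range. The only substantive check needed is that the admissible $\mathbf{h}$ lies in the index range required by the relevant exponential-sum theorem; this is the step I will watch most carefully, though it is essentially a bookkeeping argument from the hypothesis $p,q\geq 2s+1$.

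For part (1), $N=|\mathcal{P}_{d,p}^{\va,\epsilon}|=p$. Since $p\geq 2s+1$, we have $2s\leq p-1$, so $\mathbf{h}\in[-p+1,p-1]^d\cap\mathbb{Z}^d$ and $\mathbf{h}\neq 0$. Then Theorem \ref{th:weil1} applies directly and gives
\[
\mu_X \;\leq\; \frac{1}{p}\cdot(d-1)\sqrt{p} \;=\; \frac{d-1}{\sqrt{p}}.
\]

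For part (2), the given assumption $|\mathcal{L}_{p,q}|=p+q-1=m-1$ fixes $N=m-1$. Because both $p\geq 2s+1$ and $q\geq 2s+1$, any admissible $\mathbf{h}$ lies in $[-p+1,p-1]^d\cap[-q+1,q-1]^d\cap\mathbb{Z}^d$ and is nonzero, which is exactly the hypothesis of Theorem \ref{th:weil2}. Applying that theorem yields
\[
\mu_X \;\leq\; \frac{(d-1)\sqrt{2m}+1}{m-1}.
\]
The anticipated difficulty is minor and purely a matter of confirming that the range $[-2s,2s]^d$ indeed sits inside the ranges required by Theorems \ref{th:weil1} and \ref{th:weil2}; once this is done, both bounds follow by a single division by $N$.
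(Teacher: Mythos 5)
Your proof is correct and is exactly the argument the paper intends: the paper states the lemma without proof, saying only that it is ``based on Theorem \ref{th:weil1} and Theorem \ref{th:weil2} respectively,'' and your reduction --- writing $\langle \mathbf{a}_{\mathbf{m}},\mathbf{a}_{\mathbf{k}}\rangle$ as an exponential sum with frequency $\mathbf{h}=\mathbf{k}-\mathbf{m}\in[-2s,2s]^{d}\setminus\{0\}$, checking via $p,q\geq 2s+1$ that $\mathbf{h}$ lies in the index ranges those theorems require, and dividing by $N=p$ (resp.\ $N=m-1$) --- supplies precisely the omitted bookkeeping. You were also right to flag that in part (2) the normalization $N=m-1$ rests on the assumption $|\mathcal{L}_{p,q}|=p+q-1$, which Theorem \ref{th:weil2} makes explicitly and which the paper arranges by its choice of $\va,\vb,\epsilon',\epsilon''$.
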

As said before, if  $\mu<1/(2M-1)$ then OMP (and also BP) can recover every $M$-sparse trigonometric polynomials. Then we have the following corollary:
\begin{theorem}
\begin{enumerate}
\item Suppose that $p>\max\left\{2s+1, (d-1)^{2}(2M-1)^{2}+1\right\}$ is a prime number
and $\va\in[1, p-1]^{d}\cap\mathbb{Z}^{d}$ . Then OMP (and also BP) recovers every $M$-sparse trigonometric polynomial $f\in \Pi_{s}^{d}(M)$ exactly from the deterministic sampling $\mathcal{P}_{d, p}^{\va, \epsilon}$.
\item Under the condition in (2) of Lemma \ref{th:coh}.
Suppose that
\[
m=p+q > \left(\left(\frac{1}{\sqrt{2}}+\frac{1}{2}\right) (2M-1)(d-1)+\sqrt{M}\right)^2.
\]
Then OMP (and also BP) recovers every $M$-sparse trigonometric polynomial $f\in \Pi_{s}^{d}(M)$ exactly from the deterministic sampling set $\mathcal{L}_{p, q}$ .
\end{enumerate}
\end{theorem}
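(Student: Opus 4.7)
The plan is to reduce both assertions to the coherence-based sufficient condition for OMP/BP recovery, namely that if $\mu_X < 1/(2M-1)$ then OMP (and BP) recovers every $M$-sparse trigonometric polynomial in $\Pi_s^d(M)$ (this is the Theorem 2.5 of \cite{Rauhut} already cited just above the statement). Once this reduction is in place, the theorem becomes a purely algebraic verification that the hypotheses on $p$ and on $m$ force the coherence bounds of Lemma \ref{th:coh} to fall below $1/(2M-1)$.

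For part (1), I would plug the hypothesis $p>(d-1)^2(2M-1)^2+1$ into the estimate $\mu_X\le (d-1)/\sqrt p$ from Lemma \ref{th:coh}(1). The requirement $(d-1)/\sqrt p<1/(2M-1)$ is equivalent to $p>(d-1)^2(2M-1)^2$, which is strictly weaker than the assumed inequality, so the conclusion is immediate. The side condition $p\ge 2s+1$ in Lemma \ref{th:coh} is inherited from the hypothesis $p>2s+1$, and it is what guarantees that every difference $\mathbf k-\mathbf m$ arising from columns of $\mathbf F_X$ lies in $[-p+1,p-1]^d\setminus\{0\}$, so that Theorem \ref{th:weil1} is actually applicable to each inner product $\langle \mathbf a_{\mathbf k},\mathbf a_{\mathbf m}\rangle$.

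For part (2), I would again start from Lemma \ref{th:coh}(2), namely $\mu_X\le\bigl((d-1)\sqrt{2m}+1\bigr)/(m-1)$. Imposing $\mu_X<1/(2M-1)$ and clearing denominators reduces the problem to the single scalar inequality
\begin{equation*}
(2M-1)(d-1)\sqrt{2m}\;<\;m-2M,
\end{equation*}
which, after writing $u=\sqrt m$ and $A=(d-1)(2M-1)$, becomes the quadratic inequality $u^{2}-\sqrt{2}\,A\,u-2M>0$ in $u$. Completing the square gives the explicit threshold
\begin{equation*}
\sqrt m \;>\; \tfrac{1}{\sqrt 2}A+\sqrt{\tfrac{A^{2}}{2}+2M},
\end{equation*}
and the plan is to bound the square root from above by $\tfrac{A}{\sqrt 2}+\sqrt{2M}$ via $\sqrt{a+b}\le\sqrt a+\sqrt b$, which converts the threshold into a clean expression matching the form $\bigl((\tfrac{1}{\sqrt 2}+\tfrac12)(d-1)(2M-1)+\sqrt M\bigr)^{2}$ given in the statement.

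The main obstacle here is purely algebraic: one has to choose the sharpest available decomposition of $\sqrt{A^{2}/2+2M}$ so that the coefficients of $A$ and of $\sqrt M$ in the resulting threshold agree with those written in the theorem. Once this bookkeeping is done and the hypothesis on $m$ is shown to imply the quadratic inequality in $\sqrt m$, one concludes $\mu_X<1/(2M-1)$ and invokes Theorem 2.5 of \cite{Rauhut} to obtain exact recovery by both OMP and BP. No further input about $\mathcal L_{p,q}$ beyond Lemma \ref{th:coh}(2) (and thus Theorem \ref{th:weil2}, which in turn uses $|\mathcal L_{p,q}|=p+q-1$) is required.
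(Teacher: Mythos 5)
Your treatment of part (1) is correct and is exactly the paper's argument: the hypothesis $p>(d-1)^2(2M-1)^2+1$ fed into $\mu_X\le (d-1)/\sqrt p$ from Lemma \ref{th:coh}(1) gives $\mu_X<1/(2M-1)$, and the Kunis--Rauhut coherence criterion finishes. Your reduction in part (2) is also right as far as it goes: with $A=(d-1)(2M-1)$ and $u=\sqrt m$, the requirement $\bigl((d-1)\sqrt{2m}+1\bigr)/(m-1)<1/(2M-1)$ is equivalent to $u^2-\sqrt2\,A\,u-2M>0$, i.e.\ to $u>u_*:=\frac{A}{\sqrt2}+\sqrt{\frac{A^2}{2}+2M}$.

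The gap is in your final step, and it is not mere bookkeeping. Bounding $\sqrt{\frac{A^2}{2}+2M}\le\frac{A}{\sqrt2}+\sqrt{2M}$ gives the threshold $u>\sqrt2\,A+\sqrt{2M}$, i.e.\ $m>2\bigl(A+\sqrt M\bigr)^2$, which does \emph{not} match the theorem's hypothesis $m>h^2$ with $h=\bigl(\frac{1}{\sqrt2}+\frac12\bigr)A+\sqrt M$: since $\bigl(\frac{1}{\sqrt2}+\frac12\bigr)^2=\frac34+\frac{1}{\sqrt2}<2$, your derived condition is strictly \emph{stronger} than the hypothesis, so the hypothesis does not imply it and the implication runs the wrong way. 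Worse, no sharper decomposition can rescue the stated constant, because the exact threshold already exceeds it: a direct computation gives
\[
h^2-\sqrt2\,A\,h-2M=\left(\frac A2+\sqrt M\right)^2-\frac{A^2}{2}-2M=-\left(\frac A2-\sqrt M\right)^2\le 0,
\]
so $h\le u_*$, with equality only in the degenerate case $(d-1)(2M-1)=2\sqrt M$. Concretely, for $d=11$, $M=1$, $m=172$ the hypothesis holds ($h^2\approx 170.9$) yet Lemma \ref{th:coh}(2) only yields $\mu_X\le\frac{10\sqrt{344}+1}{171}\approx 1.09>1=1/(2M-1)$, so the coherence criterion is not triggered. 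For comparison, the paper's own proof of (2) consists solely of the sentence ``Similarly, we can prove (2)'' and never checks this algebra; your computation in fact exposes that the constant in the published statement is too small for the route through Lemma \ref{th:coh}(2), and the clean corrected hypothesis your own estimate delivers is $m>2\bigl((d-1)(2M-1)+\sqrt M\bigr)^2$ (or one keeps the exact quadratic threshold $m>u_*^2$). As written, though, your claim that the decomposition ``matches the form given in the statement'' is false, and that step fails.
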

\begin{proof}
We first consider (1).
Note that $p\geq (d-1)^{2}(2M-1)^{2}+1$ implies that  $(d-1)/\sqrt{p}<1/(2M-1)$.
According to (1) in Lemma \ref{th:coh}, if $(d-1)/\sqrt{p}<1/(2M-1)$ then $\mu<1/(2M-1)$ and hence the conclusion follows.  Similarly, we can prove (2).
\end{proof}

\end{document}